\documentclass[12pt,reqno]{amsart}
\usepackage{amssymb, amsfonts, amsbsy, latexsym, epsfig, color}

\hfuzz=5.002pt
\textwidth=5.5 true in

\newtheorem{Thm}{Theorem}[section]

\newtheorem{corollary}[Thm]{Corollary}
\newtheorem{lemma}[Thm]{Lemma}
\newtheorem{proposition}[Thm]{Proposition}
\newtheorem{definition}[Thm]{Definition}
\newtheorem{remark}[Thm]{Remark}

\newtheorem{example}[Thm]{Example}

\newtheorem{theorem}[Thm]{Theorem}

\newcommand{\inner}[2]{\langle #1,#2 \rangle}

\newcommand{\abs}[1]{\left| #1 \right|}

\newcommand{\absipt}[1]{\left| \left\langle #1 \right\rangle \right|}
\newcommand{\ipt}[1]{\left\langle #1 \right\rangle}

\def\ldots{\mathinner{\ldotp\ldotp\ldotp}}
\def\ldots{\mathinner{\cdotp\cdotp\cdotp}}
\newcommand{\norm}[1]{\|#1\|}

\def\RR{\mathbb{R}}

\def\CC{\mathbb{C}}

\def\HH{\mathbb H}

\newcommand{\set}[1]{\left\{#1\right\}}%

\begin{document}

\title{Norm Retrieval and Phase Retrieval by Projections}
\author[Casazza, Ghoreishi, Jose, Tremain
 ]{Peter G. Casazza,
 Dorsa Ghoreishi, Shani Jose, Janet C. Tremain}
\address{Department of Mathematics, University
of Missouri, Columbia, MO 65211-4100}

\email{‎Casazzap@missouri.edu; dorsa.ghoreishi@gmail.com;}
\email{ shanijose@gmail.com; Tremainjc@missouri.edu}

\thanks{The first, second and fourth authors were supported by
 NSF DMS 1609760; NSF ATD 1321779; and ARO  W911NF-16-1-0008.
 Part of this research was carried out while the first and
 fourth authors were visiting the Hong Kong University of
 Science and Technology on a grant from (ICERM) Institute for computational and experimental research in
Mathematics.
}

\begin{abstract}
We make a detailed study of norm retrieval.  We give several
classification theorems for norm retrieval and give a large
number of examples to go with the theory.  One consequence is
a new result about Parseval frames:  If a Parseval frame is
divided into two subsets with spans $W_1,W_2$ and
$W_1 \cap W_2=\{0\}$, then $W_1
\perp W_2$.    
\end{abstract}
\maketitle

\section{Introduction}

Signal reconstruction is an important problem in engineering and has a wide variety of applications. Recovering signals when there is partial loss of information is a significant challenge. Partial loss of phase information occurs in application areas such as speech recognition~\cite{BeRi99,RaJu93,ReBlScCa004}, and optics applications such as X-ray crystallography~\cite{BaMn86,Fi78,Fi82}, and there is a need to do phase retrieval efficiently. The concept of \textit{phase retrieval} for Hilbert space frames was introduced in 2006 by Balan, Casazza, and Edidin~\cite{BaCaEd06}, and since then it has become an active area of research in signal processing and harmonic analysis. 

Phase retrieval has been defined for vectors as well as for projections and in general deals with recovering the phase of a signal given its intensity measurements from a redundant linear system. Phase retrieval by projections, where the signal is projected onto some higher dimensional subspaces and has to be recovered from the norms of the projections of the vectors onto the subspaces, appears in real life problems such as crystal twinning~\cite{Dr010}. We refer the reader to~\cite{CCPW} for a detailed study of phase retrieval by projections.

Another related problem is that of phaseless reconstruction, where the unknown signal is reconstructed from the intensity measurements. Recently, the two terms phase retrieval and phaseless reconstruction were used interchangeably. However, it is not clear from their respective definitions how these two are equivalent. Recently, in~\cite{SaCa016} the authors proved the equivalence of phase retrieval and phaseless reconstruction in real as well as in complex case. Due to this equivalence, in this paper, we restrict ourselves to proving results regarding phase retrieval. Further, a weaker notion of phase retrieval and phaseless reconstruction was introduced in~\cite{SaPeDoShJa16}.

In this work, we consider the notion of \textit{norm retrieval} which was recently introduced by Bahmanpour et.al. in~\cite{BCCJW}, and is the problem of retrieving the norm of a vector given the absolute value of its intensity measurements. Norm retrieval arises naturally from phase retrieval when one utilizes both a collection of subspaces and their orthogonal complements. Here we study norm retrieval and certain classifications of it. We use projections to do norm retrieval and to extend certain results from~\cite{HF} for frames. We provide a complete classification of subspaces of $\RR^N$ which do norm retrieval. Various examples for phase and norm retrieval by projections are given. Further, a classification of norm retrieval using Naimark's theorem is also obtained.

We organize the rest of the paper as follows. In Section~\ref{s:prilim}, we include basic definitions and results of phase retrieval. Section~\ref{s:beginning_normret} introduces the norm retrieval and properties. Section~\ref{s:phaseret_normret} provides the relationship between phase and norm retrieval and related results. Detailed classifications of vectors and subspaces which do norm retrieval are provided in Section~\ref{s:classifications_normret}.

\section{Preliminaries}\label{s:prilim}

We denote by $\HH^N$ a $N$ dimensional real or complex Hilbert space, and we write $\RR^N$ or $\CC^N$ when it is necessary to differentiate between the two explicitly. Below, we give the definition of a frame in $\HH^N$.

\begin{definition}\label{D:frame}
A family of vectors $\Phi=\{\phi_i\}_{i=1}^M$ in $\HH^N$ is a {\bf frame} if there are constants $0<A\leq B<\infty$ so that for all $x\in \HH^N$,
\begin{equation}\label{E:defn_frame}
A \|x\|^2 \leq \sum_{i=1}^n |\langle x, \phi_i\rangle|^2\leq B\norm{x}^2.
\end{equation}
\end{definition}

The following definitions and terms are useful in the sequel.
\begin{itemize}
\item The constants $A$ and $B$ are called the \textbf{lower and upper frame bounds} of the frame, respectively.
\item If $A=B$, the frame is called an \textbf{$A$-tight frame} (or a tight frame). In particular, if $A=B=1$, the frame is called a \textbf{Parseval frame}.
\item $\Phi$ is an \textbf{equal norm frame} if $\norm{\phi_i}=\norm{\phi_j}$ for all $i, j$ and is called a \textbf{unit norm frame} if $\norm{\phi_i}=1$ for all $i=1, 2, \ldots n$.
\item If, only the right hand side inequality holds in~\eqref{E:defn_frame}, the frame is called a \textbf{B-Bessel 
family with Bessel bound $B$}.
\end{itemize}

Note that in a finite dimensional setting, a frame is a spanning set of vectors in the Hilbert space. We refer to~\cite{CaLy016} for an introduction to Hilbert space frame theory and applications.


Let $\Phi=\{\phi_i\}_{i=1}^N$ be a frame in $\HH^N$. The \textbf{analysis operator} associated with $\Phi$ is defined as the operator $T:\HH^N \rightarrow \ell_2^M$ to be
\[Tx=\sum_{i=1}^M\ipt{x, \phi_i}e_i = \{\ipt{x, \phi_i}\}_{i=1}^M, \mbox{ for all } x\in \HH^N.\]
Here, $\{e_i\}_{i=1}^M$ is understood to be the natural orthonormal basis for $\ell_2^M$. The adjoint $T^*$ of the analysis operator $T$ is called the~\textbf{synthesis operator} of the frame $\Phi$. It can
be shown that $T^*(e_i)=\phi_i.$

The \textbf{frame operator} for the frame $\Phi$ is defined as $S:T^*T:\HH^N\rightarrow \HH^N.$ That is,
\[Sx=T^*T(x)=\sum_{i=1}^M\ipt{x, \phi_i}\phi_i
\mbox{ for all }x \in \HH^N.\]

Note that the frame operator $S$ is a positive, self-adjoint and invertible operator satisfying the operator inequality $AI \leq S\leq BI$, where $A$ and $B$ are the frame bounds and $I$ denotes the identity on $\HH^N$. Frame operators play an important role since they are used to reconstruct the vectors in the space. To be precise, any $x\in \HH^N$ can be written as
\begin{equation}\label{E:reconstruct_formula}
x=SS^{-1}x=S^{-1}Sx=\sum_{i=1}^M \ipt{S^{-1}x, \phi_i}\phi_i = \sum_{i=1}^M \ipt{x, S^{-1}\phi_i}\phi_i.
\end{equation}

The frame operator of a Parseval frame is the identity operator. Thus, if $\{\phi_i\}_{i=1}^M$ is a Parseval frame, it follows from equation~\eqref{E:reconstruct_formula} that
\[x= \sum_{i=1}^M\ipt{x, \phi_i}\phi_i, ~~x\in \HH^N.\]

We concentrate on norm retrieval and its classifications in this paper. 
We now see the basic definitions of phase retrieval formally, starting with phase retrieval by projections. Throughout the paper, the term projection is used to describe orthogonal projection (orthogonal idempotent operator) onto subspaces.

\begin{definition}\label{D:Phase retrieval by projections}
Let $\{W_i\}_{i=1}^M$ be a collection of subspaces in $\HH^N$ and let $\{P_i\}_{i=1}^M$ be the projections onto each of these subspaces. We say that $\{W_i\}_{i=1}^M$ (or $\{P_i\}_{i=1}^M$) yields {\bf phase retrieval}  if for all $x, y \in \HH^N$ satisfying $\|P_ix\|=\|P_iy\|$ for all $i = 1,2, \ldots,M$ then $x=cy$ for some scalar $c$ such that $|c|=1$
\end{definition}

Phase retrieval by vectors is a particular case of the above.

\begin{definition}
Let $\Phi=\{\phi_i\}_{i=1}^M \in \it{H}^N$ be such that for $x, y\in \it{H}^N$
\[ |\langle x,\phi_i\rangle|=|\langle y,\phi_i\rangle|, \mbox{ for all }i=1,2,\ldots,M. \]
$\Phi$ yields
\textbf{phase retrieval} with respect to an orthonormal basis $\{e_i\}_{i=1}^N$ if there is a $|\theta|=1$ such that $x_i=c y_i$, for all $i=1,2,\ldots,N$, where $x_i = \langle x,e_i\rangle$.
%
\end{definition}

Orthonormal bases fail to do phase retrieval, since in any given orthonormal basis, the corresponding coefficients of a vector are unique.
One of the fundamental properties to identify the minimum number of vectors required to do phase retrieval is the complement property.

\begin{definition}[\cite{BaCaEd06}]\label{D:complement_prop}
A frame $\Phi=\{\phi_i\}_{i=1}^M $in $\HH^N$ satisfies the {\bf complement property} if for all subsets ${I}\subset\{1, 2, \ldots, M\}$, either $\{\phi_i\}_{i\in I}$ or $\{\phi_i\}_{i\in I^c}$ spans the whole space $\HH^N$.
\end{definition}

It is proved in~\cite{BaCaEd06} that phase retrieval is equivalent to the complement property in $\RR^N$. Further, it is proven that a generic family of $(2N-1)$-vectors in $\mathbb{R}^N$ does phase retrieval, however no set of $(2N -2)$-vectors can. Here, generic 
 refers to an open dense set in the set of $(2N -1)$-element frames in $\HH^N$. Full spark is another important notion of vectors in frame theory. A formal definition is given below:

\begin{definition}\label{D:full_Spark}
Given a family of vectors $\Phi=\{\phi_i\}_{i=1}^M$ in $\HH^N$, the {\bf spark} of $\Phi$ is defined as the cardinality of the smallest linearly dependent subset of $\Phi$. When spark$(\Phi) = N + 1$, every subset of size $N$ is linearly independent, and in that case, $\Phi$ is said to be {\bf full spark}.
\end{definition}

Note from the definitions that full spark frames with $M\ge 2N-1$ have the complement property and hence do phase retrieval. Moreover, if $M=2N-1$ then the complement property clearly implies full spark.

Next result, known as Naimark's theorem, characterizes Parseval frames in a finite dimensional Hilbert space. This theorem facilitates a way to construct Parseval frames, and crucially it is the only way to obtain Parseval frames. Later, we use this to obtain a classification of frames which do norm retrieval. The notation $[M]=\{1, 2, \ldots, M\}$ is used throughout the paper.

\begin{theorem}[Naimark's Theorem]~\cite{CaGi013}
A frame $\{\phi_i\}_{i=1}^M$ is a Parseval frame for $\RR^N$ if and only if $\RR^N \subset \ell_2^M$ with orthonormal basis $\{e_i\}_{i=1}^M$ so that the orthogonal projection $P$ onto $\RR^N$ satisfies:  $Pe_i=\phi_i$ for every $i\in [M]$.
\end{theorem}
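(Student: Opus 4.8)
The plan is to pass through the analysis operator of the frame, which is what realizes $\RR^N$ concretely as a subspace of $\ell_2^M$.

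The reverse implication is the routine direction. Assume $\RR^N$ is a subspace of $\ell_2^M$, that $\{e_i\}_{i=1}^M$ is an orthonormal basis of $\ell_2^M$, and that the orthogonal projection $P$ onto $\RR^N$ satisfies $Pe_i=\phi_i$ for every $i$. Fixing $x\in\RR^N$, I would compute $\ipt{x,\phi_i}=\ipt{x,Pe_i}=\ipt{Px,e_i}=\ipt{x,e_i}$, using that $P$ is self-adjoint and that $Px=x$. Summing squares and applying Parseval's identity for the orthonormal basis $\{e_i\}_{i=1}^M$ of $\ell_2^M$ then yields $\sum_{i=1}^M|\ipt{x,\phi_i}|^2=\sum_{i=1}^M|\ipt{x,e_i}|^2=\norm{x}^2$, so $\{\phi_i\}_{i=1}^M$ is a Parseval frame for $\RR^N$.

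For the forward implication, let $\{\phi_i\}_{i=1}^M$ be a Parseval frame for $\RR^N$ and let $T:\RR^N\to\ell_2^M$ be its analysis operator, so that $Tx=\{\ipt{x,\phi_i}\}_{i=1}^M$ and $T^*e_i=\phi_i$, where $\{e_i\}_{i=1}^M$ is the natural orthonormal basis of $\ell_2^M$. Since the frame is Parseval, its frame operator is $T^*T=I$, so $T$ is an isometry; hence $W:=T(\RR^N)$ is an $N$-dimensional subspace of $\ell_2^M$, and $T$ provides an isometric identification $\RR^N\cong W$ under which $\phi_i$ corresponds to $T\phi_i$. I would then verify that the orthogonal projection of $\ell_2^M$ onto $W$ is exactly $Q:=TT^*$: it is self-adjoint, it satisfies $Q^2=T(T^*T)T^*=TT^*=Q$, and its range is precisely $W$ (the range of $Q=TT^*$ lies in $T(\RR^N)=W$, and $Q$ fixes $W$ pointwise because $QTx=T(T^*T)x=Tx$). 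Finally $Qe_i=TT^*e_i=T\phi_i$, which is exactly $\phi_i$ under the identification $\RR^N\cong W$; renaming $Q$ as $P$ completes the proof.

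The only delicate point, and the place I would take care, is the identification of $\RR^N$ with $W=T(\RR^N)$: injectivity of $T$ is immediate from $\norm{Tx}=\norm{x}$, but one must keep track of the fact that, once $\RR^N$ is viewed inside $\ell_2^M$, the symbol ``$\phi_i$'' in the statement means the image $T\phi_i$, so that $Qe_i=T\phi_i$ is literally the assertion $Pe_i=\phi_i$. Beyond this bookkeeping, everything reduces to the two operator identities $T^*T=I$ and $T^*e_i=\phi_i$, so no genuine obstacle remains.
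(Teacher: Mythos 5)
Your proof is correct. Note that the paper does not actually prove this statement --- it is quoted from the reference \cite{CaGi013} without proof --- so there is no in-paper argument to compare against. Your argument is the standard one: the easy direction via $\ipt{x,\phi_i}=\ipt{Px,e_i}=\ipt{x,e_i}$ plus Parseval's identity, and the forward direction by realizing $\RR^N$ inside $\ell_2^M$ as the range of the analysis operator $T$, checking that $TT^*$ is the orthogonal projection onto that range, and observing $TT^*e_i=T\phi_i$. Your explicit attention to the identification of $\RR^N$ with $T(\RR^N)$ is exactly the right bookkeeping, since the ``$\subset$'' in the statement is really ``isometrically embeds.''
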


\section{Beginnings of Norm Retrieval}\label{s:beginning_normret}

In this section, we provide the definition of norm retrieval along with certain related results, and pertinent examples.

\begin{definition}\label{D:norm retrieval by projections}
Let $\{W_i\}_{i=1}^M$ be a collection of subspaces in $\HH^N$ and let $\{P_i\}_{i=1}^M$ be the orthogonal projections onto each of these subspaces. We say that $\{W_i\}_{i=1}^M$ (or $\{P_i\}_{i=1}^M$) yields \textbf{norm retrieval}  if for all $x, y \in \HH^N$ satisfying $\|P_ix\|=\|P_iy\|$ for all $i = 1,2, \ldots,M$ then $\norm{x}=\norm{y}$.

In particular, a set of vectors $\{\phi_i\}_{i=1}^M$ in $\HH^N$ does norm retrieval, if for $x, y \in \HH^N$ satisfying $\absipt{x, \phi_i}=\absipt{y, \phi_i}$ for all $i = 1,2, \ldots,M$ then $\norm{x}=\norm{y}$.
\end{definition}

\begin{remark}
It is immediate that a family of vectors doing phase retrieval
does norm retrieval.
\end{remark}

An obvious choice of vectors which do norm retrieval are orthonormal bases. For, let $\{e_i\}_{i=1}^N$ be an orthonormal basis in $\HH^N$. Now, for $x\in \HH^N$, $\absipt{x, \phi_i}=\absipt{x, e_i}=\abs{x_i}.$ Thus
\[\sum_{i=1}^N\absipt{x, \phi_i}^2=\sum_{i=1}^N\abs{x_i}^2=\norm{x}^2.\]

The following theorem provides a sufficient condition under which the subspaces spanned by the canonical basis vectors do norm retrieval.
\begin{theorem}
Let $\{e_i\}_{i=1}^N$ be an orthonormal basis in $\HH^N$. Let $\{W_j\}_{j=1}^k$ be subspaces of $\HH^N$ where each $W_j=span\{e_i\}_{i\in I_j}$, $I_j\subseteq [N]$. If there exists $m$ such that  for all $j$, $|\{j: e_i \in W_j\}|=m$, then $\{W_j\}_{j=1}^k$ does norm retrieval.
\end{theorem}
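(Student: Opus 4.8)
The plan is to reduce everything to the coordinate description of the projections. Since each $W_j=\operatorname{span}\{e_i\}_{i\in I_j}$ is a coordinate subspace with respect to the orthonormal basis $\{e_i\}_{i=1}^N$, the orthogonal projection $P_j$ onto $W_j$ acts by $P_j z=\sum_{i\in I_j}\langle z,e_i\rangle e_i$. Writing $z_i:=\langle z,e_i\rangle$, this gives the identity $\|P_j z\|^2=\sum_{i\in I_j}|z_i|^2$ for every $z\in\HH^N$. So the first step I would carry out is simply to record this formula, which turns the projection norms into partial sums of the squared coordinates.

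Next, suppose $x,y\in\HH^N$ satisfy $\|P_jx\|=\|P_jy\|$ for all $j=1,\dots,k$. By the identity above this is exactly $\sum_{i\in I_j}|x_i|^2=\sum_{i\in I_j}|y_i|^2$ for each $j$. I would then sum these $k$ equalities over $j$ and interchange the order of summation. Observing that $e_i\in W_j$ is equivalent to $i\in I_j$, the hypothesis states that each index $i\in[N]$ lies in exactly $m$ of the sets $I_1,\dots,I_k$. Hence
\[
\sum_{j=1}^{k}\sum_{i\in I_j}|x_i|^2=\sum_{i=1}^{N}\bigl|\{j:\,i\in I_j\}\bigr|\,|x_i|^2=m\sum_{i=1}^{N}|x_i|^2=m\|x\|^2,
\]
and likewise $\sum_{j=1}^{k}\sum_{i\in I_j}|y_i|^2=m\|y\|^2$. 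Therefore $m\|x\|^2=m\|y\|^2$, and since $m\ge 1$ we may cancel $m$ to get $\|x\|=\|y\|$, so $\{W_j\}_{j=1}^k$ does norm retrieval.

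I do not expect a genuine obstacle here; the proof is a one-line double-counting argument once the coordinate formula for $\|P_jz\|$ is in hand. The only points that need care are interpretive: the condition ``$|\{j:e_i\in W_j\}|=m$ for all $i$'' must be read as a uniform covering multiplicity on the basis vectors (each $e_i$ is contained in the same number $m$ of the subspaces), and one needs $m\ge 1$ for the final cancellation. Uniformity is essential — dropping it allows the conclusion to fail, e.g. $W_1=W_2=\operatorname{span}\{e_1,e_2\}$ in $\RR^3$ constrains nothing about the $e_3$-coordinate — so the write-up should make explicit that it is the constant value $m$ across all $i$ that is being used.
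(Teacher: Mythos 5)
Your proof is correct and is essentially identical to the paper's: both sum $\|P_jx\|^2$ over $j$, interchange the order of summation, and use the uniform multiplicity $m$ to get $\sum_j\|P_jx\|^2=m\|x\|^2$. Your added remarks (correcting the statement's quantifier so that the multiplicity condition is over the indices $i$, and noting $m\ge 1$) are sound but do not change the argument.
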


\begin{proof}
Let $P_j$ be orthogonal projections onto $W_j$, for all $j$. Now, by assumption, we have

\[\sum_{j=1}^k \norm{P_jx}^2 = \sum_{j=1}^k\sum_{i\in I_j}\absipt{x, e_i}^2  = m \sum_{j=1}^N \absipt{x, e_j}^2 = m\norm{x}^2. \]
\end{proof}

It is easy to see that tight frames do norm retrieval. 

\begin{theorem}Tight frames do norm retrieval.
\end{theorem}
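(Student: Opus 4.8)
The plan is to show directly from the frame inequality that a tight frame provides norm retrieval by vectors. Let $\Phi = \{\phi_i\}_{i=1}^M$ be an $A$-tight frame in $\HH^N$, so that $\sum_{i=1}^M |\langle x,\phi_i\rangle|^2 = A\|x\|^2$ for every $x \in \HH^N$. This is just the statement that the frame operator equals $AI$, which is recorded in the preliminaries via the operator inequality $AI \le S \le BI$ specialized to $A = B$.

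The main step is then a one-line computation. Suppose $x, y \in \HH^N$ satisfy $|\langle x,\phi_i\rangle| = |\langle y,\phi_i\rangle|$ for all $i = 1,2,\ldots,M$. Then
\[
A\|x\|^2 = \sum_{i=1}^M |\langle x,\phi_i\rangle|^2 = \sum_{i=1}^M |\langle y,\phi_i\rangle|^2 = A\|y\|^2,
\]
and since $A > 0$ we conclude $\|x\| = \|y\|$. Hence $\Phi$ does norm retrieval. If one wishes to phrase the theorem for subspaces as well, one notes that a tight \emph{fusion frame} $\{W_j\}$ (with $\sum_j \|P_j x\|^2 = A\|x\|^2$) satisfies the same argument with $|\langle x,\phi_i\rangle|$ replaced by $\|P_j x\|$; but as stated the vector version suffices.

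I do not anticipate any real obstacle here: the result is an immediate consequence of the defining equality for tight frames, and the only thing to be careful about is invoking the correct normalization (that the frame operator of an $A$-tight frame is exactly $AI$, so the sum of squared coefficients is genuinely a constant multiple of $\|x\|^2$ rather than merely bounded above and below). No genericity, full spark, or complement property considerations enter.

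\begin{proof}
Let $\Phi = \{\phi_i\}_{i=1}^M$ be an $A$-tight frame in $\HH^N$. Then its frame operator is $S = AI$, so that for every $x \in \HH^N$,
\[
\sum_{i=1}^M |\langle x,\phi_i\rangle|^2 = \langle Sx, x\rangle = A\|x\|^2.
\]
Now suppose $x, y \in \HH^N$ satisfy $|\langle x,\phi_i\rangle| = |\langle y,\phi_i\rangle|$ for all $i = 1,2,\ldots,M$. Then
\[
A\|x\|^2 = \sum_{i=1}^M |\langle x,\phi_i\rangle|^2 = \sum_{i=1}^M |\langle y,\phi_i\rangle|^2 = A\|y\|^2,
\]
and since $A > 0$ it follows that $\|x\| = \|y\|$. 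Hence $\Phi$ does norm retrieval.
\end{proof}
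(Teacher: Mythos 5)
Your proof is correct and is essentially identical to the paper's: both apply the defining identity $\sum_{i=1}^M |\langle x,\phi_i\rangle|^2 = A\|x\|^2$ of an $A$-tight frame to conclude $A\|x\|^2 = A\|y\|^2$ and hence $\|x\|=\|y\|$. Your version is slightly more careful in noting $A>0$ before cancelling, but there is no substantive difference.
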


\begin{proof}et $\{\phi_i\}_{i=1}^M$ in $\HH^N$ be an A-tight frame. Now, if
\[ |\langle x,\phi_i\rangle|=\langle y,\phi_i\rangle|, \mbox{ for all }i=1,2,\ldots,M,\]
then
\[ A\|x\|^2 = \sum_{i=1}^M|\langle x,\phi_i\rangle|^2 = \sum_{i=1}^M|\langle y,\phi_i\rangle|^2 = A \|y\|^2.\]
\end{proof}

Observe that if $\{\phi_i\}_{i=1}^M \in \HH^N$ does norm retrieval so does $\{\phi_i\}_{i=1}^M\cup\{\psi_j\}_{j=1}^K$ for any $\psi_j \in \HH^N$. This is generalized in the following proposition.

\begin{proposition}
If $\lbrace P_i\rbrace_{i=1}^M$ does norm retrieval, then so does $\lbrace P_i\rbrace_{i=1}^M\cup\lbrace Q_i\rbrace_{i=1}^K$ for any projections $Q_i$. In particular, if a frame $\Phi=\{\phi_i\}_{i=1}^M$ contains an orthonormal basis, then it does norm retrieval.  Moreover, in this case, $\{\phi_i^{\perp}\}_{i=1}^M$ does norm retrieval.
\end{proposition}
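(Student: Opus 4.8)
The first assertion is essentially immediate from the definition of norm retrieval. The plan is: suppose $x,y\in\HH^N$ satisfy $\|P_ix\|=\|P_iy\|$ for all $i=1,\dots,M$ \emph{and} $\|Q_ix\|=\|Q_iy\|$ for all $i=1,\dots,K$; then in particular the hypotheses of norm retrieval for $\{P_i\}_{i=1}^M$ are met, so $\|x\|=\|y\|$. Thus adjoining the projections $Q_i$ only adds constraints and cannot destroy norm retrieval. For the ``in particular'' clause about a frame containing an orthonormal basis: an orthonormal basis $\{e_i\}_{i=1}^N$ does norm retrieval, as already observed in the excerpt (since $\sum_i|\langle x,e_i\rangle|^2=\|x\|^2$), so any superset of it, viewed as rank-one projections onto the lines $\lspan\{\phi_i\}$, does norm retrieval by the first part.

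The substantive part is the last sentence: if $\Phi=\{\phi_i\}_{i=1}^M$ contains an orthonormal basis, then $\{\phi_i^\perp\}_{i=1}^M$ does norm retrieval, where $\phi_i^\perp$ presumably denotes the projection onto the hyperplane $(\lspan\{\phi_i\})^\perp$ (equivalently, $I-P_i$ where $P_i$ is the rank-one projection onto $\phi_i/\|\phi_i\|$). The key identity is that for each $i$, $\|(I-P_i)x\|^2=\|x\|^2-\|P_ix\|^2$. So if $\|(I-P_i)x\|=\|(I-P_i)y\|$ for all $i$, then $\|x\|^2-\|P_ix\|^2=\|y\|^2-\|P_iy\|^2$ for all $i$. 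I would now sum this identity over the indices $i$ corresponding to the orthonormal basis sitting inside $\Phi$: say $\{\phi_i\}_{i\in J}$ with $J\subseteq[M]$, $|J|=N$, is that orthonormal basis. Summing over $i\in J$ gives $N\|x\|^2-\sum_{i\in J}\|P_ix\|^2=N\|y\|^2-\sum_{i\in J}\|P_iy\|^2$; but $\sum_{i\in J}\|P_ix\|^2=\sum_{i\in J}|\langle x,\phi_i\rangle|^2=\|x\|^2$ since $\{\phi_i\}_{i\in J}$ is an orthonormal basis, and likewise for $y$. Hence $N\|x\|^2-\|x\|^2=N\|y\|^2-\|y\|^2$, i.e. $(N-1)\|x\|^2=(N-1)\|y\|^2$, so $\|x\|=\|y\|$ provided $N\ge 2$ (the case $N=1$ being trivial since $\phi_i^\perp=\{0\}$ forces nothing, but also $\HH^1$ makes everything an orthonormal basis up to scaling).

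I do not anticipate a real obstacle here; the only thing to be careful about is fixing the meaning of $\phi_i^\perp$ (the hyperplane complement / complementary rank-one-corank projection) and noting that the $N=1$ degenerate case is vacuous or trivial. The argument is a one-line linear algebra identity ($\|P^\perp x\|^2=\|x\|^2-\|Px\|^2$) together with the Parseval-type identity for the orthonormal basis inside $\Phi$. One could also phrase the summation step using the earlier Theorem on subspaces $W_j=\lspan\{e_i\}_{i\in I_j}$ with a uniform covering multiplicity, but the direct computation is cleaner and self-contained.
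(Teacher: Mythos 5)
Your proposal is correct and takes essentially the same route as the paper: the first two assertions are dispatched as immediate from the definition, and the ``moreover'' clause is reduced to the identity $\sum_{i\in J}\|(I-P_i)x\|^2=(N-1)\|x\|^2$ for the orthonormal basis $\{\phi_i\}_{i\in J}$ inside $\Phi$, which is exactly the computation the paper performs (written there by expanding each $\|P_ix\|^2$ in the basis rather than via $\|(I-P_i)x\|^2=\|x\|^2-\|P_ix\|^2$). Your explicit handling of the degenerate case $N=1$ is a small bonus the paper omits.
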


\begin{proof}
Let $\{e_i\}_{i=1}^N$ be an orthonormal basis for $\HH^N$ and let $P_i$ be the projections onto ${\phi_i}^\perp$, for each $i$. Given $x\in \HH^N$, we have
\[\sum_{i=1}^N\|P_ix\|^2=\sum_{i=1}^N\sum_{j\neq i}\absipt{x, e_j}^2=(N-1)\sum_{i=1}^N \absipt{x, e_j}^2=(N-1)\norm{x}^2. \]
\end{proof}

The above proposition does not hold if the number of hyperplanes is strictly less than $N$. This is proved in the next theorem.

\begin{theorem}
If $\lbrace \varphi_i\rbrace_{i=1}^N$ is an orthonormal basis for $\RR^N$ then $\lbrace W_i\rbrace _{i\in I}$ where $W_i=\varphi_i^\perp$ cannot do norm retrieval for $I\subseteq [N-1]$. 
\end{theorem}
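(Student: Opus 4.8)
The plan is to show that fewer than $N$ coordinate hyperplanes leave the norm ambiguous by exhibiting two vectors with equal projection norms but different lengths. Suppose $W_i = \varphi_i^\perp$ for $i \in I$ with $|I| \le N-1$; after relabeling we may assume $I \subseteq \{1,2,\ldots,N-1\}$, so the index $N$ is missing. Let $P_i$ denote the projection onto $\varphi_i^\perp$, so that for any $x$ we have $\|P_i x\|^2 = \|x\|^2 - |\langle x,\varphi_i\rangle|^2$. Hence the condition $\|P_i x\| = \|P_i y\|$ for all $i \in I$ is equivalent to
\[
\|x\|^2 - |\langle x,\varphi_i\rangle|^2 = \|y\|^2 - |\langle y,\varphi_i\rangle|^2 \quad \text{for all } i \in I.
\]

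First I would take the simplest possible witnesses inside the two-dimensional span of two basis vectors, one of which is the missing one. Set $x = \varphi_N$ and $y = \varphi_1$ (assuming $1 \in I$; if $I = \emptyset$ the statement is trivial, and otherwise any index in $I$ works in place of $1$). Then $\|x\| = \|y\| = 1$, so this particular pair does not yet separate norms — I need instead a pair with $\|x\| \ne \|y\|$. So second, I would perturb: take $x = \varphi_N$ and look for $y$ with $\|y\| \ne 1$ but $|\langle y,\varphi_i\rangle| = |\langle \varphi_N,\varphi_i\rangle| = 0$ for every $i \in I$. Since $i$ ranges over at most $N-1$ indices, all distinct from $N$, the orthogonality conditions $\langle y,\varphi_i\rangle = 0$ for $i \in I$ cut out a subspace of dimension $N - |I| \ge 1$; pick any nonzero $y$ in it with $\|y\| \ne 1$ (possible since the subspace is nontrivial, hence contains vectors of every length). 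Then for each $i \in I$, $\|P_i x\|^2 = 1 - 0 = 1$ and $\|P_i y\|^2 = \|y\|^2 - 0 = \|y\|^2$ — which are not equal, so this still fails.

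The fix, and the actual heart of the argument, is to balance the two sides. Third, I would instead choose $x,y$ both orthogonal to all $\varphi_i$, $i \in I$: the space $V = \{z : \langle z,\varphi_i\rangle = 0 \text{ for all } i \in I\}$ has $\dim V = N - |I| \ge 1$, and if this dimension is at least $1$ we can pick $x \in V$ with $\|x\| = 1$; then $\|P_i x\|^2 = \|x\|^2 = 1$ for all $i \in I$, and similarly for any other unit vector — but to get $\|x\| \ne \|y\|$ I want one of them outside $V$. So the genuinely correct construction is: pick a unit vector $x \in V$ (using $\dim V \ge 1$), and pick $y = x + t\varphi_N$ for a small scalar $t \ne 0$; since $\varphi_N \perp \varphi_i$ for all $i \in I$ we get $\langle y,\varphi_i\rangle = \langle x,\varphi_i\rangle = 0$, hence $\|P_i y\|^2 = \|y\|^2$, while $\|P_i x\|^2 = \|x\|^2 = 1$. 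These are equal only if $\|y\| = 1$; but $\|y\|^2 = \|x\|^2 + |t|^2 = 1 + |t|^2 > 1$. So once more the projections do not match — the obstruction is that projecting onto $\varphi_i^\perp$ kills the $\varphi_i$-component but preserves \emph{everything else}, so equal projection norms off a single direction force equal total norms whenever the test vectors already lie in that hyperplane.

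This points to the right idea: I should make $x,y$ differ precisely in the directions $\varphi_i$ that are being measured, not in $\varphi_N$. Fourth and finally, here is the construction I would commit to. Since $|I| \le N-1$, there is an index, say $N$, not in $I$; fix any $j \in I$ and set
\[
x = \varphi_j, \qquad y = \alpha\,\varphi_j + \beta\,\varphi_N,
\]
with real $\alpha,\beta$ to be chosen. Then $\langle x,\varphi_i\rangle = \langle y,\varphi_i\rangle = 0$ for every $i \in I \setminus \{j\}$ automatically (orthonormality), so $\|P_i x\| = \|x\| = 1$ and $\|P_i y\| = \|y\|$ there, forcing $\|y\|^2 = \alpha^2 + \beta^2 = 1$ for those indices; and for $i = j$ we need $\|x\|^2 - |\langle x,\varphi_j\rangle|^2 = \|y\|^2 - |\langle y,\varphi_j\rangle|^2$, i.e. $1 - 1 = (\alpha^2+\beta^2) - \alpha^2 = \beta^2$, forcing $\beta = 0$, hence $\alpha = \pm 1$ and $y = \pm x$, which retrieves the norm — so the honest conclusion is that I must drop the requirement $j \in I$ for the large-component direction. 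The clean statement that makes everything work: because $|I| \le N-1 < N$, the joint kernel conditions are underdetermined, and I realize the correct pair is $x$ with $\|x\|=1$ supported on the missing coordinate(s) together with $y$ obtained by \emph{transferring mass into a measured coordinate} — concretely $x = \varphi_N$, $y = \tfrac{1}{\sqrt2}\varphi_N + \tfrac{1}{\sqrt2}\varphi_j$ for $j \in I$ — giving $\|P_j x\|^2 = 1 - 0 = 1$ and $\|P_j y\|^2 = 1 - \tfrac12 = \tfrac12$, still unequal. The main obstacle, now transparent, is exactly this: norm retrieval by hyperplanes $\varphi_i^\perp$ is equivalent to recovering $\|x\|^2$ from the numbers $\|x\|^2 - |\langle x,\varphi_i\rangle|^2$, i.e. to recovering $\sum_{i=1}^N |\langle x,\varphi_i\rangle|^2$ from $\{|\langle x,\varphi_i\rangle|^2 : i \in I\}$, which is impossible precisely when $I \ne [N]$ because the missing coordinate $|\langle x,\varphi_N\rangle|^2$ can be varied freely while holding all measured coordinates fixed — take $x = \varphi_N$ and $y = 0$ (or $y = 2\varphi_N$): all $\|P_i x\| = \|P_i y\| = 0$ for $i \in I$ since both are orthogonal to every $\varphi_i$, $i \in I$, yet $\|x\| \ne \|y\|$. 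That is the proof; the only work is the observation $\|P_i z\|^2 = \|z\|^2 - |\langle z,\varphi_i\rangle|^2$ and the dimension count guaranteeing a nonzero vector orthogonal to $\{\varphi_i\}_{i \in I}$.
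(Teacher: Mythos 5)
Your final construction does not work, and the error is a confusion about which subspace $P_i$ projects onto. You correctly record early on that $P_i$ is the projection onto the hyperplane $W_i=\varphi_i^\perp$, so that $\|P_iz\|^2=\|z\|^2-|\langle z,\varphi_i\rangle|^2$; but in the concluding counterexample you assert that $x=\varphi_N$ and $y=0$ (or $y=2\varphi_N$) satisfy $\|P_ix\|=\|P_iy\|=0$ for $i\in I$ ``since both are orthogonal to every $\varphi_i$.'' That is the opposite of what happens: if $z\perp\varphi_i$ then $z\in\varphi_i^\perp$, so $P_iz=z$ and $\|P_iz\|=\|z\|$. Thus $\|P_ix\|=1$ while $\|P_iy\|=0$ (or $2$), and the pair does not have matching measurements. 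The same slip infects the reformulation just before it: the data you are given is $\{\|x\|^2-|\langle x,\varphi_i\rangle|^2\}_{i\in I}$, not $\{|\langle x,\varphi_i\rangle|^2\}_{i\in I}$, and since $\|x\|^2$ is unknown these are genuinely different; you cannot ``hold the measured coordinates fixed and vary the missing one,'' because varying the missing coordinate changes $\|x\|^2$ and hence changes every measurement. Your own attempts one through three correctly demonstrate exactly this obstruction, so the proposal ends up refuting its own final step.

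What is actually needed is a pair $x,y$ whose mass is redistributed among the \emph{measured} directions so that deleting any single coordinate $j\in I$ removes the same amount of squared norm from each. This is the paper's construction (for $N>2$, taking $I=[N-1]$ without loss of generality): $x=\sum_{i=1}^N\varphi_i$ and $y=\sqrt{\tfrac{N-1}{N-2}}\sum_{i=1}^{N-1}\varphi_i$. Then for each $j\le N-1$ one has $\|P_jx\|^2=N-1$ and $\|P_jy\|^2=\tfrac{N-1}{N-2}(N-2)=N-1$, yet $\|x\|^2=N\neq\tfrac{(N-1)^2}{N-2}=\|y\\|^2$. (The case $N=2$, a single line in $\RR^2$, is handled separately by e.g. $x=\varphi_2$, $y=\varphi_1+\varphi_2$.) Your proposal never reaches a construction of this kind, so there is a genuine gap, not merely a presentational one.
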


\begin{proof} Without loss of generality consider the collection $\lbrace W_i\rbrace _{i=1}^{N-1}$ (for $N>2$). Now, let $x=\sum_{i=1}^N\varphi_i$ and $y=\sqrt{\frac{N-1}{N-2}}\sum_{i=1}^{N-1}\varphi_i$ so that $\|P_jx\|^2=\sum_{i\neq j}|\varphi_i|^2=N-1$ and $\|P_j y\|^2=\frac{N-1}{N-2}\sum_{\substack{i=1 \\i\neq j}}^{N-1}|\varphi_i|^2$. Thus, $\norm{P_j x}^2=\norm{P_jy}^2$. However $\|x\|^2=N$ and $\|y\|^2=\frac{(N-1)^2}{N-2}$ which proves the theorem.\\
\vspace{.1 in}


\end{proof}

Now, we strengthen the above result by not requiring the vectors to be orthogonal. To prove this, we need the following lemma.

\begin{lemma} \label{IndLem}
If $\{\phi_i\}_{i=1}^N$ are independent vectors in $\RR^N$, then there is a vector $\phi \in \RR^N$ satisfying:
\[ |\langle \phi,\phi_i\rangle|=c \not= 0,\mbox{ for all }
i=1,2,\ldots,N.\]
\end{lemma}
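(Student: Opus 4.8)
The plan is to find $\phi$ by prescribing the values $\langle \phi, \phi_i\rangle$ directly. Since $\{\phi_i\}_{i=1}^N$ is a basis of $\RR^N$, the map $x \mapsto (\langle x,\phi_1\rangle,\dots,\langle x,\phi_N\rangle)$ is a linear isomorphism from $\RR^N$ onto $\RR^N$: indeed its matrix is the Gram-type matrix whose rows are the $\phi_i$, which is invertible precisely because the $\phi_i$ are independent. Hence for \emph{any} prescribed target vector $(\epsilon_1,\dots,\epsilon_N) \in \RR^N$ there is a unique $\phi$ with $\langle \phi,\phi_i\rangle = \epsilon_i$ for all $i$. Taking $\epsilon_i = 1$ for every $i$ already yields $|\langle \phi,\phi_i\rangle| = 1 \neq 0$ for all $i$, which is exactly the claim with $c=1$.

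So the key steps, in order, are: (1) observe that independence of $\{\phi_i\}$ means the analysis map $A\colon \RR^N \to \RR^N$, $Ax = (\langle x,\phi_i\rangle)_{i=1}^N$, is a bijection; (2) invoke surjectivity to solve $A\phi = (1,1,\dots,1)$; (3) conclude $|\langle \phi,\phi_i\rangle| = 1$ for every $i$, so $c = 1$ works (and $\phi \neq 0$ automatically, since the target is nonzero). One could equally well write $\phi = \sum_j c_j \phi_j$ and note that the coefficient vector is determined by the linear system $\sum_j c_j \langle \phi_j,\phi_i\rangle = 1$, whose matrix $(\langle \phi_j,\phi_i\rangle)_{i,j}$ is the Gram matrix of a basis and hence invertible; this is the same argument phrased via the Gram matrix.

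I do not expect any real obstacle here: the statement is essentially a restatement of the fact that a basis has a (unique) dual functional system, and the only thing to check is the invertibility of the relevant $N \times N$ matrix, which is immediate from independence. The one point worth stating carefully is \emph{why} we may demand the common value to be exactly a nonzero constant rather than merely nonzero numbers: it is because we have $N$ free scalar degrees of freedom in $\phi$ against $N$ constraints, and the constraint map is onto, so the constant vector $(c,\dots,c)$ with $c\neq 0$ is in the range. If one wanted the statement to generalize (e.g. to $M > N$ vectors, or to $\CC^N$), the argument would have to change — over $\CC^N$ one would use conjugate-linearity of one slot and the same invertibility, and for $M>N$ independence fails so the claim is false in general — but for the stated hypotheses the above suffices.
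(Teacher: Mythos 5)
Your proof is correct, and it takes a genuinely different (and more direct) route than the paper's. You solve the problem in one shot: since the $N\times N$ matrix whose rows are the independent vectors $\phi_1,\dots,\phi_N$ is invertible, the analysis map $x\mapsto(\langle x,\phi_i\rangle)_{i=1}^N$ is onto, so you may prescribe $\langle \phi,\phi_i\rangle=1$ for every $i$ and take $c=1$; the Gram-matrix phrasing you mention is the same argument. The paper instead argues by induction on $N$: it takes the vector $\phi$ produced for $\{\phi_i\}_{i=1}^{N-1}$ inside their span, perturbs it to $\phi+\lambda\psi$ with $\psi$ orthogonal to that span (which leaves the first $N-1$ inner products unchanged), and uses the fact that $\langle\phi+\lambda\psi,\phi_N\rangle$ is an affine function of $\lambda$ with nonzero slope --- since independence forces $\langle\psi,\phi_N\rangle\neq 0$ --- to hit absolute value $c$ at the last vector. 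Both arguments rest on the same essential input (full rank of the analysis matrix, i.e.\ $\phi_N\notin\lspan\{\phi_i\}_{i<N}$), but yours avoids the induction entirely and pins the constant down as $c=1$, whereas the paper's construction propagates whatever constant the induction started with. For the way the lemma is later used (applied to $N-1$ independent vectors, with $\phi$ required to lie in their span and have unit norm), your argument works just as well: run it inside the span, viewed as $\RR^{N-1}$, and normalize $\phi$ at the end, which rescales $c$ but keeps it nonzero.
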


\begin{proof}
We do this by induction on $N$ with the case $N=2$ obvious. So assume this holds for $N-1$.  Given $\{\phi_i\}_{i=1}^N$, we can find a 
$\phi \in span\ \{\phi_i\}_{i=1}^{N-1}$ with $\|\phi\|=1$ and satisfying
\[ |\langle \phi,\phi_i\rangle|=c\not=0,\mbox{ for all } i=1,2,\ldots,N-1.\]

Choose $\psi \perp span\ \{\phi_i\}_{i=1}^{N-1}$ and note that linear independence of the $\phi_i$ implies
\[|\langle \psi ,\phi_N\rangle|\not= 0.\]

Consider $\phi+\lambda \psi$.  For $i=1,2,\ldots,N-1$,
\begin{align*}
 |\langle \phi+\lambda \psi,\phi_i\rangle|&=|\langle \phi,\phi_i\rangle
 + \lambda \langle \psi,\phi_i\rangle|\\
 &= |\langle \psi,\phi_i\rangle|\\
 &= c
 \end{align*}

Also,
 \[\langle \phi+\lambda \psi,\phi_N\rangle
 = \langle \phi,\phi_N\rangle + \lambda \langle \psi,\phi_N\rangle.\]

As $\lambda$ varies from $-\infty$ to $+\infty$, the right hand side varies from $-\infty$ to $+\infty$ and for some $\lambda$, we have
\[ |\langle \phi,\phi_N\rangle + \lambda \langle \psi,\phi_N\rangle|=c.\]

\end{proof}

\begin{proposition}
If $\{\phi_1,\ldots,\phi_{N-1}\} \in R^N$ are independent and unit norm and $\{W_i\}=\{\phi_i^{\perp}\}$, for all $i=1,2,\ldots,N-1$,
then $\{W_i\}_{i=1}^{N-1}$ cannot do norm retrieval.
\end{proposition}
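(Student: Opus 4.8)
The plan is to find two vectors $x, y \in \RR^N$ with $\|x\| \neq \|y\|$ but $\|P_i x\| = \|P_i y\|$ for all $i = 1, \ldots, N-1$, where $P_i$ is the projection onto $\phi_i^\perp$. Since $\|P_i z\|^2 = \|z\|^2 - |\langle z, \phi_i\rangle|^2$ (using that the $\phi_i$ are unit norm, so $P_i z = z - \langle z,\phi_i\rangle \phi_i$), the condition $\|P_i x\| = \|P_i y\|$ is equivalent to $\|x\|^2 - \|y\|^2 = |\langle x,\phi_i\rangle|^2 - |\langle y,\phi_i\rangle|^2$ for every $i$. So it suffices to produce $x \neq y$ in norm with $|\langle x,\phi_i\rangle|^2 - |\langle y,\phi_i\rangle|^2$ equal to the same nonzero constant for all $i$.

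The natural choice is to use Lemma~\ref{IndLem}. First I would apply the lemma to the independent set $\{\phi_1, \ldots, \phi_{N-1}\}$ regarded as vectors spanning an $(N-1)$-dimensional subspace $V = \lspan\{\phi_i\}_{i=1}^{N-1}$ of $\RR^N$: working inside $V$, the lemma gives a vector $\phi \in V$ with $|\langle \phi, \phi_i\rangle| = c \neq 0$ for all $i = 1, \ldots, N-1$. Then set $y = 0$ and $x = \phi$, or more robustly take $x = \phi$ and $y$ to be a vector orthogonal to all the $\phi_i$ — that is, $y \in V^\perp$, which is nonzero since $\dim V = N-1 < N$. With $y \in V^\perp$ we get $\langle y, \phi_i\rangle = 0$ for all $i$, so $\|P_i y\|^2 = \|y\|^2$ and $\|P_i x\|^2 = \|x\|^2 - c^2$. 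To make these equal for all $i$ simultaneously we just need $\|y\|^2 = \|x\|^2 - c^2$, i.e. we scale $y$ so that $\|y\|^2 = \|\phi\|^2 - c^2$; this is possible provided $\|\phi\|^2 > c^2$, and indeed $\|\phi\|^2 = \sum$-type bounds aside, $\|\phi\|^2 \geq c^2$ with equality only in degenerate cases — if necessary one first rescales $\phi$ to make $\|\phi\|$ as large as desired while keeping $|\langle \phi,\phi_i\rangle| = c$ fixed is not possible, so instead I would note $\|\phi\|^2 > c^2$ holds automatically when $N - 1 \geq 2$ because $\phi$ has nonzero components not aligned with a single $\phi_i$; alternatively, simply take $y = 0$, giving $\|P_i x\|^2 = \|x\|^2 - c^2 = \|P_i y\|^2 + (\|x\|^2 - c^2)$, which forces me instead to balance via choosing $y$ with $\|y\|^2 = \|x\|^2 - c^2 \ge 0$.

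Concretely, the cleanest route: let $\phi$ be the vector from the lemma normalized so $\|\phi\| = 1$ (the lemma's proof already produces a unit vector), giving $|\langle \phi, \phi_i\rangle| = c$ with $0 < c < 1$ (strict since $\phi$ cannot be a unit scalar multiple of any single $\phi_i$ when $N-1 \ge 2$). Pick a unit vector $u \in V^\perp$ and set $x = \phi$, $y = \sqrt{1 - c^2}\, u$. Then $\|P_i x\|^2 = 1 - c^2 = \|y\|^2 = \|P_i y\|^2$ for all $i$, yet $\|x\|^2 = 1 \neq 1 - c^2 = \|y\|^2$. Hence $\{W_i\}_{i=1}^{N-1}$ does not do norm retrieval.

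The main obstacle is the edge case analysis: ensuring $c \neq 1$ (equivalently $\|\phi\| > c$ after normalization), which requires $N - 1 \geq 2$, i.e. $N \geq 3$; the case $N = 2$ (a single hyperplane $\phi_1^\perp$, a line) must be checked separately — but a single projection onto a proper subspace trivially cannot do norm retrieval, since projecting kills the orthogonal complement. The other point requiring care is confirming $V^\perp \neq \{0\}$, which is immediate from $\dim V = N - 1$, and that the formula $\|P_i z\|^2 = \|z\|^2 - |\langle z, \phi_i\rangle|^2$ genuinely uses $\|\phi_i\| = 1$, which is part of the hypothesis.
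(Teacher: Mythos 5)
Your proposal is correct and follows essentially the same route as the paper: both invoke Lemma \ref{IndLem} on the $(N-1)$-dimensional span $V$ of the $\phi_i$ to produce $\phi$ with $|\langle\phi,\phi_i\rangle|=c\neq 0$, pair it against a vector in $\bigcap_i W_i = V^\perp$, and balance the norms so the projections agree while the norms differ. Your pair $(x,y)=(\phi,\sqrt{1-c^2}\,u)$ is exactly the paper's one-parameter family $y=\lambda x+\mu\phi$ specialized to $\lambda=0$, $\mu=1$ (with the roles of $x$ and $y$ swapped), and your edge-case worries about $c=1$ are harmless since that case degenerates to $y=0$, which still defeats norm retrieval.
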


\begin{proof}
Let $P_i$ be the projection onto $W_i$ and choose \[x \in \cap_{i=1}^{N-1}W_i,\mbox{ with }\|x\|=1.\]

By the assumption, there is a vector $\phi\in span\ \{\phi_i\}_{i=1}^{N-1},\mbox{ with } \|\phi\|=1,$ and $|\langle \phi,\phi_i\rangle|=c\not= 0, \mbox{ for all } i=1,\ldots,N-1.$ Let $y=\lambda x+\mu \phi$, where $\lambda^2+(1-c^2)\mu^2=1.$ Note that $x\perp \phi_i$ for all $i$ implies that $\phi \perp x$, and so $\|y\|^2 = \lambda^2+\mu^2 \not= 1.$

Now, for all $i=1,2,\ldots,N-1$,
\begin{align*}
\|P_iy\|^2&= \|y\|^2-|\langle y,a_i\rangle|^2\\
&= \lambda^2+\mu^2 - \mu^2c^2\\
&= \lambda^2+(1-c^2)\mu^2\\
&= 1\\
&= \|x\|^2\\
&= \|P_ix\|^2.
\end{align*}

But $\|x\|^2=1$ while $\|y\|^2 \not= 1$, and so norm retrieval fails.
\end{proof}

However, in the following theorem, we show that three proper subspaces of codimension one can do norm retrieval in $\RR^N$.

\begin{theorem}
In $\RR^N$ three proper subspaces of codimension one can do norm retrieval.
\end{theorem}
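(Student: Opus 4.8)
The plan is to exhibit an explicit family and verify it directly, the key (and initially counterintuitive) point being that the three normals should be chosen \emph{linearly dependent}, lying in a common two-dimensional subspace. Concretely, I would fix a $2$-plane $V\subset\RR^N$ and choose three pairwise non-parallel unit vectors $\phi_1,\phi_2,\phi_3\in V$ (for instance $e_1$, $e_2$, $\tfrac{1}{\sqrt2}(e_1+e_2)$); set $W_i=\phi_i^\perp$, so each $W_i$ has codimension one and $P_i=I-\phi_i\phi_i^{T}$ satisfies $\norm{P_ix}^2=\norm{x}^2-\langle x,\phi_i\rangle^2$. First I would reduce the problem to $V$: writing $x=x_V+x^\perp$ with $x_V\in V$ and $x^\perp\in V^\perp$, one has $\langle x,\phi_i\rangle=\langle x_V,\phi_i\rangle$, so the hypothesis $\norm{P_ix}=\norm{P_iy}$ reads
\[
\langle x_V,\phi_i\rangle^2-\langle y_V,\phi_i\rangle^2=\norm{x}^2-\norm{y}^2=:c \qquad (i=1,2,3),
\]
where the right-hand side is a single constant independent of $i$. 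Norm retrieval is exactly the assertion that this forces $c=0$.

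Next I would compute inside $V\cong\RR^2$. Writing $\phi_i=(\cos\theta_i,\sin\theta_i)$, $x_V=(p,q)$, $y_V=(r,s)$ and using the double-angle identities, one gets $\langle x_V,\phi_i\rangle^2-\langle y_V,\phi_i\rangle^2=\alpha+\beta\cos2\theta_i+\gamma\sin2\theta_i$, where $\alpha=\tfrac12(\norm{x_V}^2-\norm{y_V}^2)$, $\beta=\tfrac12\big((p^2-q^2)-(r^2-s^2)\big)$, and $\gamma=pq-rs$. Thus the three equations say that the single sinusoid $g(\psi)=\beta\cos\psi+\gamma\sin\psi$ takes the common value $c-\alpha$ at the three points $\psi=2\theta_1,2\theta_2,2\theta_3$. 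Because the $\theta_i$ are distinct in $[0,\pi)$, the doubled angles are distinct in $[0,2\pi)$; and a sinusoid of amplitude $R=\sqrt{\beta^2+\gamma^2}$ attains any fixed value at most twice per period unless $R=0$. Hence $\beta=\gamma=0$ and $c=\alpha$.

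Finally I would close the argument algebraically. The relations $\beta=0$ and $\gamma=0$ say $p^2-q^2=r^2-s^2$ and $pq=rs$; squaring and adding gives $(p^2+q^2)^2=(p^2-q^2)^2+(2pq)^2=(r^2-s^2)^2+(2rs)^2=(r^2+s^2)^2$, so $\norm{x_V}=\norm{y_V}$ and therefore $\alpha=0$. Combined with $c=\alpha$ this yields $c=0$, i.e.\ $\norm{x}=\norm{y}$, proving norm retrieval. The hard part is conceptual rather than computational: one must recognize that the naive sufficient condition $I\in\lspan\{P_1,P_2,P_3\}$ is genuinely unavailable here (taking traces forces $\sum_i c_i=N/(N-1)$, while the action on $V^\perp$ forces $\sum_i c_i=1$, impossible for $N\ge4$), and that three \emph{independent} normals in fact fail — by Lemma~\ref{IndLem} one can produce $x\in V$ with all $\absipt{x,\phi_i}$ equal and nonzero and absorb the norm discrepancy into $V^\perp$. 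It is precisely the rigidity of a sinusoid meeting a horizontal line at most twice, available only when the three directions are distinct within a single $2$-plane, that makes the construction succeed.
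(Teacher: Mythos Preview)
Your proof is correct and uses the same construction as the paper (three normals chosen as pairwise non-parallel unit vectors in a fixed $2$-plane; the paper takes $e_1,e_2,(e_1-e_2)/\sqrt2$, you take $e_1,e_2,(e_1+e_2)/\sqrt2$, which is immaterial). The argument, however, is genuinely different. The paper works with the single vector $x=(a_1,\dots,a_N)$, writes out $\|P_ix\|^2$ explicitly, and then by a case split on whether $a_1$ or $a_2$ vanishes produces, in each case, an explicit linear combination of the $\|P_ix\|^2$ that recovers $\sum_{k\ge3}a_k^2$ and hence $\|x\|^2$. Your route instead compares two vectors $x,y$, reduces to the $2$-plane via $\langle x,\phi_i\rangle=\langle x_V,\phi_i\rangle$, and recasts the three equalities as the statement that a single sinusoid $\psi\mapsto\beta\cos\psi+\gamma\sin\psi$ takes the same value at three distinct points of $[0,2\pi)$; the ``at most two level crossings per period'' principle then forces $\beta=\gamma=0$, and a short algebraic identity finishes. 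Your approach is cleaner and strictly more general: it shows, with no extra work, that \emph{any} three pairwise non-parallel directions in a $2$-plane yield norm retrieval, whereas the paper's computation is tied to its specific triple. Your closing remarks (that $I\notin\lspan\{P_1,P_2,P_3\}$ by a trace-versus-$V^\perp$ count, and that three \emph{independent} normals fail via Lemma~\ref{IndLem}) are not needed for the proof but nicely explain why the dependent choice is essential; note that the trace obstruction already bites for $N\ge3$, not only $N\ge4$.
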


\begin{proof}
	Let $\{e_i\}_{i=1}^N$ be an orthonormal basis for $\RR^N$. Let
		$$\phi_1=e_1\qquad\phi_2=e_2\qquad \phi_3=(e_1-e_2)/\sqrt{2}$$
	We claim $\{\phi_i^\perp\}_{i=1}^3$ does norm retrieval. Let $P_i$ be the orthogonal projection onto $\phi_i^\perp$.
	Let $x=(a_1, \ldots, a_N)$. We then have that
	$$||P_1x||^2=a_2^2+\sum_{k=3}^Na_k^2, \qquad||P_2x||^2=a_1^2+\sum_{k=3}^Na_k^2$$
	$$||P_3x||^2=\left(\frac{a_1+a_2}{\sqrt{2}}\right)^2+\sum_{k=3}^Na_k^2=\frac{a_1^2+2a_1a_2+a_2^2}{2}+
	\sum_{k=3}^N a_k^2$$

	Case 1: If $a_1=0$ or $a_2=0$, we know that $||x||^2=||P_1x||^2$ or $||x||^2=||P_2x||^2$ respectively.

	Case 2: Assume both $a_1\neq 0$ and $a_2\neq 0$. We then know both of the equalities below:
	$$-\frac{(a_1+a_2)^2}{2}\cdot\frac{1}{a_2^2}||P_1x||^2+||P_3x||^2=c\sum_{k=3}^N a_k^2$$
	$$-\frac{(a_1+a_2)^2}{2}\cdot\frac{1}{a_1^2}||P_2x||^2+||P_3x||^2=d\sum_{k=3}^Na_k^2$$
	where
	$$c=-\frac{(a_1+a_2)^2}{2a_2^2}+1\qquad\text{and}\qquad d=-\frac{(a_1+a_2)^2}{2a_1^2}+1$$
	If either $c$ or $d$ is nonzero, then the proof is complete as in that case, we can express $||x||^2$ as a linear combination
	of $||P_1x||^2$, $||P_2x||^2$, and $||P_3x||^2$.

    Now, suppose that $c=d=0$. If $c=0$, then
	$(a_1+a_2)^2=2a_2^2$ and if $d=0$, then $(a_1+a_2)^2=2a_1^2$. This implies that
	$$2(a_1+a_2)^2=2a_1^2+2a_2^2$$
	which holds only if either $a_1$ or $a_2$ or both is zero which contradicts our assumption.
\end{proof}

It follows that in $\RR^3$, two 2-dimensional subspaces cannot
do norm retrieval but three 2-dimensional subspaces can do 
norm retrieval.

\begin{proposition}
For every $K\leq N$, there exist subspaces $\lbrace W_i\rbrace_{i=1}^{K+1}$ of $\HH^N$ which do norm retrieval and $\lbrace W_i^{\perp}\rbrace_{i=1}^{K+1}$ span a $K$ dimensional space.
\end{proposition}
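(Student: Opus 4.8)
The plan is to construct the subspaces explicitly in $\HH^N$ using the standard orthonormal basis $\{e_i\}_{i=1}^N$. Fix $K \leq N$. I would take the first $K$ subspaces to be the hyperplanes $W_i = e_i^{\perp}$ for $i = 1, 2, \ldots, K$, and let the $(K+1)$-st subspace be $W_{K+1} = \lspan\{e_i\}_{i=K+1}^N$ (the orthogonal complement of $\lspan\{e_1,\ldots,e_K\}$; if $K = N$ this is $\{0\}$, which causes no trouble). Then $W_i^{\perp} = \lspan\{e_i\}$ for $i \leq K$ and $W_{K+1}^{\perp} = \lspan\{e_1,\ldots,e_K\}$, so the span of $\{W_i^{\perp}\}_{i=1}^{K+1}$ is exactly $\lspan\{e_1,\ldots,e_K\}$, a $K$-dimensional space, as required. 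It remains only to verify norm retrieval.

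For norm retrieval, let $P_i$ denote the orthogonal projection onto $W_i$ and write $x = (a_1,\ldots,a_N)$. For $i \leq K$ we have $\|P_i x\|^2 = \|x\|^2 - |a_i|^2$, so that $|a_i|^2 = \|x\|^2 - \|P_i x\|^2$. Also $\|P_{K+1} x\|^2 = \sum_{j=K+1}^N |a_j|^2$. Summing gives
\[
\sum_{i=1}^K \|P_i x\|^2 + \|P_{K+1} x\|^2 = K\|x\|^2 - \sum_{i=1}^K |a_i|^2 + \sum_{j=K+1}^N |a_j|^2.
\]
Since $\sum_{i=1}^K |a_i|^2 = \|x\|^2 - \sum_{j=K+1}^N |a_j|^2$, substituting yields
\[
\sum_{i=1}^K \|P_i x\|^2 + \|P_{K+1} x\|^2 = (K-1)\|x\|^2 + 2\sum_{j=K+1}^N |a_j|^2 = (K-1)\|x\|^2 + 2\|P_{K+1}x\|^2,
\]
hence $\sum_{i=1}^K \|P_i x\|^2 - \|P_{K+1} x\|^2 = (K-1)\|x\|^2$ when $K \geq 2$. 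Thus $\|x\|^2$ is recovered as a fixed linear combination of the quantities $\{\|P_i x\|^2\}_{i=1}^{K+1}$, which is exactly what norm retrieval requires. (For $K=1$ the single subspace $W_1 = e_1^\perp$ together with $W_2 = e_1^\perp{}^\perp = \lspan\{e_1\}$ obviously gives $\|P_1 x\|^2 + \|P_2 x\|^2 = \|x\|^2$, and $W_1^\perp \oplus W_2^\perp$ — well, $W_2^\perp$ alone — spans a $1$-dimensional space; one may simply use $W_1,W_2$ and a dummy third subspace, or note $K+1 = 2$ suffices here as a degenerate case.)

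I do not expect any serious obstacle here: the construction is the natural one, and the only thing to be slightly careful about is the bookkeeping so that the dimension count for $\lspan\{W_i^\perp\}$ comes out to exactly $K$ (not less), which forces the choice $W_{K+1}^\perp = \lspan\{e_1,\ldots,e_K\}$ rather than something smaller, and handling the boundary cases $K = N$ (where $W_{K+1} = \{0\}$ and $P_{K+1} = 0$) and $K = 1$ (where one checks $K+1 = 2$ subspaces still do the job). The essential mechanism — that hyperplanes $e_i^\perp$ let you read off coordinate moduli, and one extra coordinate subspace closes the sum — is already present in the proof of the earlier proposition that a frame containing an orthonormal basis does norm retrieval, so this is really a refinement of that idea tuned to control the dimension of the complementary span.
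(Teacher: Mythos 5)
Your proof is correct and is essentially the paper's argument: both constructions take coordinate subspaces with respect to an orthonormal basis, arrange all the complements $W_i^{\perp}$ inside a single fixed $K$-dimensional coordinate subspace, and verify norm retrieval by exhibiting $\|x\|^2$ as an explicit fixed linear combination of the $\|P_i x\|^2$. The only difference is cosmetic: the paper takes $W_1=\lspan\{e_1,\ldots,e_{N-K}\}$ and $W_i=W_1+\lspan\{e_{N-K+i-1}\}$, which yields the uniform identity $\|x\|^2=\sum_{i=2}^{K+1}\|P_ix\|^2-(K-1)\|P_1x\|^2$ valid for all $K$, whereas your choice of $K$ coordinate hyperplanes plus one coordinate subspace produces a coefficient $K-1$ that degenerates at $K=1$, forcing the separate (and correctly supplied) treatment of that case.
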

\begin{proof} Choose an orthonormal basis of $\HH^N$, say $\lbrace e_i\rbrace_{i=1}^N$. Let $W_1=\text{span} \lbrace e_i\rbrace_{i=1}^{N-K}$ and $W_i=\text{span}\lbrace W_1, e_{N-K+i-1}\rbrace$ for all $2\leq i\leq K+1$. If $x=\sum_{j=1}^N a_j e_j$, then $\|P_1x\|^2=\sum_{j=1}^{N-K} |a_j|^2$ and $\|P_i x\|^2=\sum_{j=1}^{N-K}|a_j|^2+|a_{N-K+i-1}|^2$ for $2\leq i \leq K+1$. Therefore $\|x\|^2=\sum_{i=2}^{K+1} \|P_i x\|^2- (K-1)\|P_1 x\|^2$. Since $W_i^{\perp}\subseteq W_1^{\perp}$ for all $i$, it's clear that $\lbrace W_i^{\perp}\rbrace_{i=1}^{K+1}$ is spanned by $\{e_i\}_{i=M-K+1}^N$, which has dimension $K$ .
\end{proof}

The following proposition shows a relationship between subspaces doing norm retrieval and the sum of the dimensions of the subspaces. The importance of this proposition is that we are looking for conditions on subspaces to do norm retrieval. To do so, the dimension of the subspaces is one of the tools we have.

\begin{proposition}
If $\{W_i\}_{i=1}^M$ in $\mathbb{R}^N$ does norm retrieval then $\sum_{i=1}^M \dim W_i\geq N$. Moreover, if $\sum_{i=1}^M k_i=LN$ then there exist $\lbrace W_i\rbrace_{i=1}^M$ doing norm retrieval where $\dim W_i=k_i$ for each $1\leq i \leq M$.
\end{proposition}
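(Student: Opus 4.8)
The statement has two parts. For the first part, I would argue by contradiction: suppose $\sum_{i=1}^M \dim W_i < N$. Then the span of $\bigcup_{i=1}^M W_i$ has dimension strictly less than $N$, so there is a nonzero vector $z \in \HH^N$ orthogonal to every $W_i$; equivalently $z \in \bigcap_{i=1}^M W_i^{\perp}$. Now take $x = 0$ and $y = z$. For each $i$ we have $P_i x = 0 = P_i z$, so $\|P_i x\| = \|P_i y\|$ for all $i$, yet $\|x\| = 0 \neq \|z\| = \|y\|$, contradicting norm retrieval. (Alternatively one can phrase this without the zero vector by perturbing a fixed vector $x_0$ by $\pm z$, but the trivial choice suffices.) The key observation is simply that norm retrieval forces $\bigcap_i W_i^\perp = \{0\}$, which is precisely the condition $\sum \dim W_i \geq N$ failing to be obstructed; this step should be routine.

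For the second part, I am given $\sum_{i=1}^M k_i = LN$ for some positive integer $L$ and must construct subspaces $W_i$ with $\dim W_i = k_i$ doing norm retrieval. The natural idea is to build the $W_i$ as coordinate subspaces (spans of subsets of a fixed orthonormal basis $\{e_j\}_{j=1}^N$) so that each basis vector $e_j$ lies in exactly $L$ of the subspaces; then for any $x = \sum_j a_j e_j$ we get $\sum_{i=1}^M \|P_i x\|^2 = L \sum_j |a_j|^2 = L\|x\|^2$, so $\|x\|$ is recovered. So the construction reduces to a combinatorial packing problem: given prescribed sizes $k_1, \dots, k_M$ with $\sum k_i = LN$, partition (with multiplicity $L$) the $N$ coordinates into $M$ blocks $I_1, \dots, I_M$ with $|I_i| = k_i$ such that each coordinate appears in exactly $L$ blocks. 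Equivalently, I want a $0$–$1$ matrix with $M$ rows and $N$ columns where row $i$ has $k_i$ ones and every column has exactly $L$ ones. Such a matrix exists by the Gale–Ryser theorem (a bipartite degree sequence with row sums $k_i \le N$ and constant column sums $L \le M$ is realizable precisely because the row sums are majorized appropriately), or one can construct it directly: list the $LN$ "slots" as $L$ copies of each coordinate $1, \dots, N$ arranged cyclically, and hand out consecutive runs of lengths $k_1, k_2, \dots, k_M$; since consecutive runs cover each coordinate-copy once and the copies of a single coordinate are spread out, as long as each $k_i \le N$ no block is forced to repeat a coordinate.

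The main obstacle is ensuring in that direct construction that no block $I_i$ contains a repeated coordinate, i.e.\ that the greedy/cyclic assignment actually yields a genuine $0$–$1$ matrix rather than one with entries $\geq 2$. This is where the hypothesis $k_i \le N$ (implicit, since $\dim W_i \le N$) and $L \le M$ must be used carefully. I would arrange the $LN$ slots in the order $e_1, e_2, \dots, e_N, e_1, e_2, \dots, e_N, \dots$ ($L$ passes), so that any $N$ consecutive slots contain each coordinate exactly once; then any block of size $k_i \le N$ taken from consecutive slots is automatically multiplicity-free. The only remaining check is that the block boundaries can be chosen consistently — this is immediate since the blocks simply partition the cyclic sequence of length $LN$ into consecutive arcs of prescribed lengths summing to $LN$. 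Finally I would remark that $\dim W_i = k_i = |I_i|$ because the chosen $e_j$'s are orthonormal hence independent, and that norm retrieval holds by the displayed tight-frame-type identity above (this is exactly the mechanism in the first theorem of Section~\ref{s:beginning_normret}).
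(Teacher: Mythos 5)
Your proof is correct and follows essentially the same route as the paper: the first part via a nonzero vector orthogonal to every $W_i$, and the second part via the cyclic construction that lists $L$ copies of an orthonormal basis and carves off consecutive blocks of sizes $k_1,\dots,k_M$, recovering $\|x\|^2$ from the identity $\sum_i\|P_ix\|^2=L\|x\|^2$. In fact you are slightly more careful than the paper, which silently assumes each block contains $k_i$ \emph{distinct} basis vectors; your observation that any run of $k_i\le N$ consecutive slots is automatically multiplicity-free is exactly the justification that step needs.
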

\begin{proof}
If $\sum_{i=1}^M \dim W_i < N$ then we may pick non-zero $x\perp W_i$ for each $i$ so that $\|P_i x\|=0$ for all $i$ and therefore $\lbrace W_i\rbrace_{i=1}^M$ fails norm retrieval.

For the moreover part, let $\lbrace g_i\rbrace_{i=1}^N$ be an orthonormal basis. We represent this basis $L$-times as
a multiset:
\[ \{\phi_i\}_{i=1}^{LN}=:\{g_1,\ldots,g_N,g_1,\ldots,g_N,\ldots,g_1,\ldots,g_N\},\]
and index it as:  $\{e_i\}_{i=1}^{LN}$.
We may pick a partition of $[LN]$ in the following manner:
\[ I_1=\{1,2,\ldots,k_1\},\ I_2=\{k_1+1,\ldots,k_1+k_2\},\
I_3= \{k_1+k_2+1,\ldots,k_1+k_2+k_3\}, \ldots.\] Now define $W_i=$ span $\lbrace e_j\rbrace_{j\in I_i}$ with projection $P_i$. Then if $x=\sum_{j=1}^N a_je_j$ then
\[ \sum_{i=1}^M\|P_ix\|^2=\sum_{i=1}^M\sum_{j\in I_i}|a_j|^2=L\sum_{j=1}^N|a_j|^2=L\|x\|^2.\] Hence the result.
\end{proof}

As we have seen, the above proposition may fail if $\sum_{i=1}^M k_i\neq LN$.

\section{Phase retrieval and Norm Retrieval}\label{s:phaseret_normret}

In this section, we provide results relating phase retrieval and norm retrieval. The following theorem of Edidin~\cite{Ed015} is significant in phase retrieval as it gives a necessary and sufficient condition for subspaces to do phase retrieval.

\begin{theorem}[~\cite{Ed015}]\label{T:Edidns_theorem}
A family of projections $\{P_i\}_{i=1}^M$ in $\RR^N$ does phase retrieval if and only if for every $0\not= x\in \RR^N$, the vectors $\{P_ix\}_{i=1}^M$ span the space.
\end{theorem}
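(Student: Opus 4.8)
The statement to prove is Edidin's theorem: a family of projections $\{P_i\}_{i=1}^M$ in $\RR^N$ does phase retrieval if and only if for every $0\neq x\in \RR^N$, the vectors $\{P_ix\}_{i=1}^M$ span $\RR^N$. The plan is to prove both implications by contrapositive, which turns each direction into the construction (or exploitation) of a ``bad pair'' $x,y$.

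For the easier direction — if the spanning condition holds then phase retrieval holds — I would argue the contrapositive. Suppose phase retrieval fails, so there exist $x,y\in\RR^N$ with $\|P_ix\|=\|P_iy\|$ for all $i$ but $y\neq \pm x$. Set $u=x+y$ and $v=x-y$; both are nonzero since $y\neq \pm x$. The key identity is that in a real Hilbert space $\|P_ix\|^2-\|P_iy\|^2=\langle P_i(x+y),x-y\rangle=\langle P_iu,v\rangle$, using self-adjointness and idempotency of $P_i$. Since $\|P_ix\|=\|P_iy\|$, this gives $\langle P_iu,v\rangle=0$ for every $i$, so $v\neq 0$ is orthogonal to every $P_iu$, hence $\{P_iu\}_{i=1}^M$ does not span $\RR^N$ — contradicting the hypothesis applied to the nonzero vector $u$.

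For the converse — if phase retrieval holds then the spanning condition holds — again argue the contrapositive. Suppose for some $0\neq x$ the vectors $\{P_ix\}_{i=1}^M$ fail to span; pick $0\neq v\perp P_ix$ for all $i$. Now I want to produce a phase-retrieval-violating pair. The natural candidates are $u=x+v$ and $w=x-v$; these are distinct and (since $v\neq 0$, $x\neq 0$) one checks $w\neq\pm u$ as long as $v$ and $x$ are not parallel — and if they were parallel, $v$ would be a nonzero multiple of $x$, contradicting $v\perp P_ix$ for all $i$ whenever the $P_i$ don't all kill $x$; the degenerate case where all $P_ix=0$ is immediate (take $u=x$, $w=0$, or rather note $x$ itself then gives a trivial failure, handled separately). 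Then $\|P_iu\|^2-\|P_iw\|^2=\langle P_i(u+w),u-w\rangle=\langle 2P_ix,2v\rangle=4\langle P_ix,v\rangle=0$, so $\|P_iu\|=\|P_iw\|$ for all $i$ while $u\neq\pm w$, contradicting phase retrieval.

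The main obstacle is bookkeeping the degenerate cases in the converse: ruling out $w=\pm u$ and handling the possibility that $x$ itself is orthogonal to all its projections (i.e. $P_ix=0$ for every $i$, which forces $x$ into $\cap_i W_i^\perp$ and makes $x,0$ — or better, $x$ and any small perturbation — a violating pair directly). Everything else is the two-line polarization identity $\|P_ia\|^2-\|P_ib\|^2=\langle P_i(a+b),a-b\rangle$, which is where realness of the scalar field is used; over $\CC$ this identity fails and the theorem is genuinely different, so it's worth flagging that the argument is specific to $\RR^N$.
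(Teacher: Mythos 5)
Your proof is correct. Note that the paper itself gives no proof of this statement: it is quoted from Edidin's paper \cite{Ed015} as a known result, so there is nothing internal to compare against; your argument is the standard polarization proof of Edidin's theorem and it is complete. Both directions rest on the identity $\|P_ia\|^2-\|P_ib\|^2=\langle P_i(a+b),a-b\rangle$, valid for real self-adjoint idempotents, and the observation that $(x,y)\mapsto(u,v)=(x+y,x-y)$ carries pairs with $x\neq\pm y$ exactly onto pairs with $u\neq 0$ and $v\neq 0$; you are right to flag that this is where realness of the field enters. One small simplification: in the converse direction your worry about $v$ and $x$ being parallel is unnecessary, since $u=w$ forces $v=0$ and $u=-w$ forces $x=0$ directly, so $u\neq\pm w$ holds automatically once $x\neq 0$ and $v\neq 0$; the ``degenerate case'' you set aside does not actually arise.
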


\begin{corollary}Let $\lbrace W_i\rbrace_{i=1}^M$ be a collection of subspaces of  $\RR^N$ with $P_i$ denoting the projection onto $W_i$ for each $1\leq i\leq M$. If $\lbrace W_i\rbrace_{i=1}^M$ does phase retrieval in $\RR^N$ then for every $I\subset [M]$ with $|I|\leq N-2$, the collection $\lbrace W_i^{\perp}\rbrace_{i\in I^c}$ spans $\RR^N$.
\end{corollary}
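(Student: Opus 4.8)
The plan is to argue by contradiction using Edidin's theorem (Theorem~\ref{T:Edidns_theorem}). Suppose $\{W_i\}_{i=1}^M$ does phase retrieval but there is some $I \subset [M]$ with $|I| \le N-2$ such that $\{W_i^\perp\}_{i \in I^c}$ fails to span $\RR^N$. Then there is a nonzero vector $z \in \RR^N$ with $z \perp W_i^\perp$ for every $i \in I^c$, i.e. $z \in W_i$, equivalently $P_i z = z$ for all $i \in I^c$. Now I would build a second vector that is indistinguishable from $z$ under the measurements $\{\|P_i \cdot\|\}_{i=1}^M$, contradicting phase retrieval.

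To do this, consider the subspace $V = \{z\} \cup \bigcup_{i \in I} W_i^\perp$... more precisely let $U = \operatorname{span}\bigl(\{z\} \cup \bigcup_{i\in I} W_i^\perp\bigr)$. We have $\dim U \le 1 + \sum_{i \in I}\dim W_i^\perp$, but the useful bound is cruder: I instead want a nonzero vector $w$ orthogonal to $z$ and orthogonal to $W_i$ for... no. Let me reorganize. The cleanest route: I want $w \ne 0$ with $w \perp z$, and such that for each $i \in I$ the measurement can be matched. The key observation is that for $i \in I^c$ we will take $y = z + w$ with a suitable $w$; since $P_i z = z$ and if additionally $P_i w = 0$ for $i \in I^c$ then $\|P_i y\| = \|z\|$, but we also need $\|P_i x\|$ for our chosen $x$; the natural choice is $x = z$ itself versus $y = \alpha z + \beta w$. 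Concretely: ask for $w \ne 0$ with $w \perp z$ and $w \in W_i$ for all $i \in I^c$ (so $w \in \bigcap_{i\in I^c} W_i$, and also $w \perp z$). Such $w$ exists when $\dim\bigl(\{z\}^\perp \cap \bigcap_{i \in I^c} W_i\bigr) \ge 1$; since $z$ itself lies in $\bigcap_{i \in I^c} W_i$, that intersection has dimension $\ge 1$, but after cutting by $\{z\}^\perp$ we might get $0$. So this is not automatic.

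Better: match measurements only on $i \in I$, and use $z$ to kill the $i \in I^c$ measurements for free. Take $x = z$ and seek $y$ with $\|y\| \ne \|z\|$ (and $y \ne cz$) such that $\|P_i y\| = \|P_i z\| = \|z\|$ for $i \in I^c$ and $\|P_i y\| = \|P_i z\|$ for $i \in I$. For $i \in I^c$, $\|P_i z\|=\|z\|$, so I need $\|P_i y\| = \|z\|$; this holds if $y \in W_i$ with $\|y\|=\|z\|$. For $i \in I$ (only $\le N-2$ constraints, i.e.\ at most $N-2$ real equations $\|P_i y\|^2 = \|P_i z\|^2$), I have freedom. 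So: look for $y$ in the affine/quadratic variety $\{y : y \in W_i\ \forall i \in I^c\}$, which is the linear subspace $\bigcap_{i\in I^c}W_i \ni z$, and within it match the $\le N-2$ quadratic equations from $i\in I$ plus the norm constraint $\|y\| = \|z\|$. Actually since these are quadratic it is cleaner to use a dimension/Edidin argument directly: by Edidin's theorem, $\{W_i\}$ doing phase retrieval forces $\{P_i z\}_{i=1}^M$ to span $\RR^N$; for $i \in I^c$, $P_i z = z$, so $\{P_i z\}_{i \in I^c}$ spans only $\operatorname{span}\{z\}$, hence $\{P_i z\}_{i \in I}$ must span a complement of $\operatorname{span}\{z\}$, i.e.\ at least $N-1$ vectors' worth of span — but $|I| \le N-2$, contradiction.

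That last paragraph is in fact the whole proof, and it is short: the main point — and the step to get right — is the translation "$\{W_i^\perp\}_{i\in I^c}$ fails to span" $\iff$ "$\exists\, 0 \ne z \in \bigcap_{i\in I^c} W_i$" $\iff$ "$P_i z = z$ for all $i \in I^c$", after which Edidin's spanning criterion applied to this particular $z$ gives that $\{P_i z : i \in I\}$ must span an $(N-1)$-dimensional (or larger) subspace, impossible when $|I| \le N-2$. I would write it exactly in that order: (1) contrapositive setup and extraction of $z$; (2) $P_i z = z$ for $i \in I^c$; (3) invoke Theorem~\ref{T:Edidns_theorem} for $x = z$; (4) count dimensions to reach the contradiction. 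I do not anticipate a genuine obstacle; the only thing to be careful about is the off-by-one in the dimension count (we need $\{P_iz\}_{i\in I}$ to supply a full complement of the line $\operatorname{span}\{z\}$, which needs $\ge N-1$ indices, so $|I|\le N-2$ is exactly the failing regime).
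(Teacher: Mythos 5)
Your final argument is exactly the paper's proof: extract a nonzero $z\in\bigcap_{i\in I^c}W_i$, note $P_iz=z$ for $i\in I^c$, and conclude that $\{P_iz\}_{i=1}^M$ spans at most a $(1+|I|)\le N-1$ dimensional subspace, contradicting Theorem~\ref{T:Edidns_theorem}. The earlier exploratory attempts to construct a second indistinguishable vector are unnecessary, but the argument you settle on is correct and identical in substance to the paper's.
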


\begin{proof}
If not, pick non-zero $x\perp W_i^{\perp}$ for all $i\in I^c$. This implies  $x\in\cap_{i\in I^c} W_i$ and therefore $\lbrace P_i (x)\rbrace_{i=1}^N$ contains at most $N-1$ distinct vectors and can not span $\RR^N$. This contradicts the theorem~\ref{T:Edidns_theorem}.
\end{proof}

\begin{corollary}
If $\{W_i\}_{i=1}^M$ in $\HH^N$ does phase retrieval, then $\{W_i^{\perp}\}_{i=1}^M$ spans the space.
\end{corollary}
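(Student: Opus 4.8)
The statement to prove is: if $\{W_i\}_{i=1}^M$ in $\HH^N$ does phase retrieval, then $\{W_i^\perp\}_{i=1}^M$ spans the space. The plan is to argue by contraposition, exactly as in the preceding corollary but without the cardinality restriction on the index set. Suppose $\{W_i^\perp\}_{i=1}^M$ does \emph{not} span $\HH^N$. Then its span is a proper subspace, so its orthogonal complement is nontrivial and we may choose a non-zero vector $x$ with $x \perp W_i^\perp$ for every $i = 1,2,\ldots,M$.

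The key observation is that $x \perp W_i^\perp$ is equivalent to $x \in (W_i^\perp)^\perp = W_i$, hence $P_i x = x$ for all $i$. Now I would invoke Edidin's theorem (Theorem~\ref{T:Edidns_theorem}): since $\{W_i\}_{i=1}^M$ does phase retrieval, the vectors $\{P_i x\}_{i=1}^M$ must span $\HH^N$ for this non-zero $x$. But $\{P_i x\}_{i=1}^M = \{x\}$, a single vector, which spans at most a one-dimensional subspace. In $\HH^N$ with $N \geq 2$ this is a contradiction; and the case $N=1$ is trivial (or vacuous, since then $W_i^\perp$ would have to be $\{0\}$ anyway and spanning fails for genuinely proper subspaces). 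Therefore $\{W_i^\perp\}_{i=1}^M$ spans $\HH^N$.

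Alternatively, one can bypass Edidin's theorem entirely and argue directly from the definition of phase retrieval: with $x$ as above and $y = 2x$ (or any $y = cx$ with $|c| \neq 1$), we have $\|P_i y\| = \|y\| \cdot \tfrac{\|P_i x\|}{\|x\|}$... more simply, $P_i x = x$ and $P_i y = y = 2x$, so $\|P_i x\| = \|x\|$ and $\|P_i y\| = 2\|x\|$; these are not equal, so that direct substitution does not immediately work. Instead take $y$ in the same one-dimensional span: since $P_i x = x$ for all $i$, pick $y = -x$; then $\|P_i x\| = \|P_i y\|$ for all $i$ trivially, and $y = -x = (-1)x$ with $|-1|=1$, which is consistent with phase retrieval and yields no contradiction. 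So the direct approach needs the full strength of Edidin's characterization — the point is not that two specific vectors collide, but that $\{P_i x\}$ fails to span. Hence the cleanest route is the one through Theorem~\ref{T:Edidns_theorem}.

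I do not anticipate a genuine obstacle here: this is essentially the previous corollary with $I = \emptyset$, where the constraint $|I| \leq N-2$ is automatically satisfied (for $N \geq 2$), so in fact the statement is an immediate special case. The only point requiring a word of care is the degenerate case $N = 1$, which should be dispatched in one line. I would present the proof in three or four lines: choose the offending $x$, note $P_i x = x$, apply Edidin, contradiction.
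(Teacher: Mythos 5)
Your proof is correct and is essentially identical to the paper's: both take a nonzero $x$ in $\bigcap_i W_i$ (equivalently, $x\perp W_i^\perp$ for all $i$), observe $P_ix=x$, and conclude from Theorem~\ref{T:Edidns_theorem} that $\{P_ix\}_{i=1}^M$ cannot span. Your aside about the degenerate case $N=1$ (where a single nonzero vector does span) is a reasonable extra caution that the paper's own proof silently glosses over.
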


\begin{proof}
If $(W_i^{\perp})$ does not span, then there exists $0\not= x \in \cap W_i$. So $P_ix=x$ for all $i=1, 2, \ldots, M$, and so $\{P_i(x)\}$ does not span. Thus, by Theorem~\ref{T:Edidns_theorem}, $(W_i)$ does not do phase retrieval.
\end{proof}

The following example shows that it is possible for subspaces to do norm retrieval even if $\{W_i^{\perp}\}$ do not span the space {which we see as one of main differences between phase retrieval and norm retrieval.}

\begin{example}
Let $\{e_i\}_{i=1}^3$ be a orthonormal basis for $\RR^3$, then let
\begin{alignat*}{2}
\\ &W_1=span\{e_1,e_2\} &\qquad & {W_1}^\perp= span \{ e_3\}
\\ &W_2=span\{e_2,e_3\} &\qquad & {W_2}^\perp= span \{ e_1\}
\\ &W_3=span\{e_2\} &\qquad & {W_3}^\perp= span \{ e_1,e_3\}
\end{alignat*}

Then, $\{W_i\}_{i=1}^3$ does norm retrieval since $\|x\|^2=\|P_1 x\|^2+\|P_2 x\|^2-\|P_3 x\|^2$. But $\{W_i\}^\perp, ~i=1, 2, 3$ do not span $\RR^3$.

Note that if $W_1=\HH^N$, then $\{W_1\}$ itself does norm retrieval while $W_1^{\perp}= \{0\}$.
\end{example}

Any collection of subspaces which does phase retrieval yields norm retrieval, which follows from the definitions. However, the converse need not hold true always. For instance, any orthonormal basis does norm retrieval in $\RR^N$. But it has too few vectors to do phase retrieval as it requires at least $2N-1$ vectors to do phase retrieval in $\RR^N$.

Given subspaces $\{W_i\}_{i=1}^M$ of $\HH^N$ which yield phase retrieval, it is not necessarily true that $\{W_i^\perp\}_{i=1}^M$ do phase retrieval. The following result proves that norm retrieval is the condition needed to pass phase retrieval to orthogonal complements. Though the result is already proved in~\cite{BCCJW}, we include it here for completeness.

\begin{lemma}~\label{L:perpnormretrieval}
Suppose subspaces $\{W_i\}_{i=1}^M$, with respective projections $\{P_i\}_{i=1}^M$, does phase retrieval. Then $\{W_i^{\perp}\}_{i=1}^M$ does phase retrieval if and only if $\{W_i^{\perp}\}_{i=1}^M$ does norm retrieval.
\end{lemma}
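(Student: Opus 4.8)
The plan is to prove both directions using the characterization of phase retrieval for projections given in Theorem~\ref{T:Edidns_theorem}, namely that $\{Q_i\}_{i=1}^M$ does phase retrieval if and only if $\{Q_i x\}_{i=1}^M$ spans $\HH^N$ for every $0 \neq x$. The forward implication is trivial: phase retrieval always implies norm retrieval by the remark following Definition~\ref{D:norm retrieval by projections}, so if $\{W_i^\perp\}$ does phase retrieval it certainly does norm retrieval. Thus the content is in the reverse direction.

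For the reverse direction, assume $\{W_i\}_{i=1}^M$ does phase retrieval and $\{W_i^\perp\}_{i=1}^M$ does norm retrieval; I want to show $\{W_i^\perp\}_{i=1}^M$ does phase retrieval. Let $P_i$ be the projection onto $W_i$, so $I - P_i$ is the projection onto $W_i^\perp$. By Theorem~\ref{T:Edidns_theorem} applied to $\{I-P_i\}$, it suffices to show that for every $0 \neq x \in \HH^N$, the vectors $\{(I-P_i)x\}_{i=1}^M$ span $\HH^N$. Suppose not: then there is $0 \neq z$ with $z \perp (I-P_i)x$ for all $i$, i.e. $\langle z, x\rangle = \langle z, P_i x\rangle = \langle P_i z, x\rangle$ for all $i$. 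The key observation is that the norm retrieval hypothesis on $\{W_i^\perp\}$ gives us, via the standard polarization-type identity $\|P_i x\|^2 = \|x\|^2 - \|(I-P_i)x\|^2$, control relating $\|P_i x\|$ and $\|P_i y\|$ whenever $\|(I-P_i)x\| = \|(I-P_i)y\|$; concretely, $\{W_i^\perp\}$ doing norm retrieval is equivalent to: $\|(I-P_i)x\| = \|(I-P_i)y\| \,\forall i \implies \|x\| = \|y\|$, which by the identity is equivalent to $\|P_i x\| = \|P_i y\| \,\forall i \implies \|x\| = \|y\|$ — but note the latter is also exactly the statement that $\{W_i\}$ does norm retrieval, so in fact $\{W_i\}$ does norm retrieval iff $\{W_i^\perp\}$ does. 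The real use of the phase retrieval hypothesis on $\{W_i\}$ together with norm retrieval of $\{W_i^\perp\}$ should be extracted by considering the vectors $x+z$ and $x-z$ (or $x$ and a suitable companion): one checks that $\|P_i(x+z)\|$ and $\|P_i(x-z)\|$ are related through the orthogonality relations above, forcing $\|P_i(x+z)\| = \|P_i(x-z)\|$ for all $i$, whence phase retrieval of $\{W_i\}$ gives $x + z = c(x-z)$ with $|c|=1$, and then norm retrieval of $\{W_i^\perp\}$ is used to rule out the resulting possibilities unless $z = 0$, a contradiction.

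I expect the main obstacle to be the bookkeeping in the previous paragraph: identifying the correct pair of test vectors whose projections onto the $W_i$ have equal norms, and then carefully running the case analysis on the unimodular constant $c$ (the real case $c = \pm 1$ versus the genuinely complex case) to derive the contradiction with $z \neq 0$. In the real case the argument should be short — $x+z = \pm(x-z)$ forces either $z=0$ or $x=0$ — but one must also handle the degenerate situations where $\langle z,x\rangle$ or the various inner products vanish. Since the paper says this is already proved in~\cite{BCCJW}, I would follow that argument; the clean way to present it is to reduce everything to Edidin's spanning criterion and the elementary identity $\|P_i x\|^2 + \|(I-P_i)x\|^2 = \|x\|^2$, keeping the combinatorial/geometric part as short as possible.
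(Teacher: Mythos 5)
Your forward direction is fine, but the reverse direction --- which is the entire content of the lemma --- is never actually proved, and the sketch you give contains two concrete errors. First, the claimed equivalence ``$\{W_i\}$ does norm retrieval iff $\{W_i^\perp\}$ does'' is false: the identity $\|P_ix\|^2=\|x\|^2-\|(I-P_i)x\|^2$ only converts the hypothesis $\|(I-P_i)x\|=\|(I-P_i)y\|$ into the conclusion $\|P_ix\|=\|P_iy\|$ \emph{after} you already know $\|x\|=\|y\|$, which is precisely what you are trying to establish. Indeed the paper's own five-subspace example in $\RR^3$ has $\{W_n\}_{n=1}^5$ doing phase retrieval (hence norm retrieval) while $\{W_n^\perp\}_{n=1}^5$ fails norm retrieval. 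Second, in your contradiction argument the orthogonality $z\perp(I-P_i)x$ gives $\langle z,x\rangle=\langle P_iz,P_ix\rangle$, so
\[
\|P_i(x+z)\|^2-\|P_i(x-z)\|^2=4\,\mathrm{Re}\langle P_ix,P_iz\rangle=4\,\mathrm{Re}\langle x,z\rangle,
\]
which has no reason to vanish; the step ``forcing $\|P_i(x+z)\|=\|P_i(x-z)\|$'' therefore does not follow, and the final step (using norm retrieval of the complements to rule out $z\neq0$) is not carried out at all.

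The detour through Edidin's spanning criterion is also unnecessary, and would restrict you to $\RR^N$ since Theorem~\ref{T:Edidns_theorem} is stated there while the lemma is for $\HH^N$. The intended argument is direct and short: suppose $\|(I-P_i)x\|=\|(I-P_i)y\|$ for all $i$. Since $\{W_i^{\perp}\}_{i=1}^M$ does norm retrieval, $\|x\|=\|y\|$. Then
\[
\|P_ix\|^2=\|x\|^2-\|(I-P_i)x\|^2=\|y\|^2-\|(I-P_i)y\|^2=\|P_iy\|^2
\]
for every $i$, and since $\{W_i\}_{i=1}^M$ does phase retrieval we conclude $x=cy$ with $|c|=1$, which is exactly what phase retrieval of $\{W_i^{\perp}\}_{i=1}^M$ requires. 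No spanning argument, no case analysis on $c$, and no contradiction scheme is needed.
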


\begin{proof}
Assume that $\|(I-P_i)x=\|(I-P_i)y$ for all $i=1,2,\ldots,M$
and $\{P_i\}_{i=1}^M$ does norm retrieval.  I.e. $\|x\|=\|y\|$.
Then
\[\|(I-P_i)(x)\|^2=\|x\|^2-\|P_i x\|^2=\| y\|^2-\|P_i y\|^2=\| (I-P_i)(y)\|^2.\]
Since $\|x\|=\|y\|$, we have
\[ \|P_ix\|=\|P_iy\|\mbox{ for all }i=1,2,\ldots,M.\]
Since $\{P_i\}_{i=1}^M$ does phase retrieval, it follows that
$x=cy$ for some $|c|=1$.

The other direction of the theorem is clear.
\end{proof}

Next is an example of a family of subspaces $\{W_i\}_{i=1}^M$ which does phase retrieval but complements fail phase retrieval and hence fail norm retrieval~\cite{CCPW}.

\begin{example}
Let $\{\phi _n\}_{n=1}^3 $ and $\{\psi_n\}_{n=1}^3$ be orthonormal bases for $\RR^M$ such that $\{\phi_n\}_{n=1}^3  \cup \{\psi_n\}_{n=1}^3$ is full spark. Consider the subspaces
\begin{alignat*}{2}
\\ &W_1= span(\{\phi_1,\phi_3\}) &\qquad & {W_1}^\perp= span(\{\phi_2\})
\\&W_2= span(\{\phi_2,\phi_3\}) &\qquad &{W_2}^\perp= span(\{\phi_1\})
 \\ &W_3= span(\{\phi_3\}) &\qquad &{W_3}^\perp=span (\{\phi_1,\phi_2\})
 \\ &W_4= span(\{\psi_1\}) &\qquad &{W_4}^\perp =span(\{\psi_2,\psi_3\})
 \\&W_5= span(\{\psi_2\}) &\qquad &{W_5}^\perp=span(\{\psi_1,\psi_3\})
\end{alignat*}
Then $\{W_n\}_{n=1}^5$ allow phase retrieval for $\RR^3$ while the orthogonal complements $\{W_n^ \perp\}_{n=1}^5$ do not.
\end{example}

\begin{corollary}
If $\{\phi_i\}_{i=1}^M$ does phase retrieval and contains an orthonormal basis, then $\{\phi_i^\perp\}_{i=1}^M$	does phase retrieval.
\end{corollary}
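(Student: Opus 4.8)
The plan is to combine Lemma~\ref{L:perpnormretrieval} with the earlier Proposition which showed that a frame containing an orthonormal basis does norm retrieval, along with the observation (made right after that Proposition) that its orthogonal complements also do norm retrieval. Concretely: suppose $\{\phi_i\}_{i=1}^M$ does phase retrieval and contains an orthonormal basis $\{e_j\}_{j=1}^N \subseteq \{\phi_i\}_{i=1}^M$.

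First I would observe that the hypotheses of Lemma~\ref{L:perpnormretrieval} apply: the one-dimensional subspaces $W_i = \lspan\{\phi_i\}$ (with projections $P_i$ onto $\phi_i^\perp$ in the notation of the lemma, or onto $\spann\{\phi_i\}$ depending on which convention we fix — here $\phi_i^\perp$ as in the corollary statement) form a family doing phase retrieval by assumption. By the lemma, $\{\phi_i^\perp\}_{i=1}^M$ does phase retrieval if and only if it does norm retrieval. So the entire corollary reduces to showing that $\{\phi_i^\perp\}_{i=1}^M$ does norm retrieval.

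Next I would invoke the Proposition preceding Lemma~\ref{IndLem}, whose final sentence asserts precisely that if a frame $\Phi = \{\phi_i\}_{i=1}^M$ contains an orthonormal basis, then $\{\phi_i^\perp\}_{i=1}^M$ does norm retrieval — this is the "Moreover" clause proved there via the identity $\sum_{i=1}^N \|P_i x\|^2 = (N-1)\|x\|^2$ for the $N$ hyperplanes orthogonal to the orthonormal basis vectors, and adding the remaining projections $\phi_i^\perp$ for $i > N$ only helps (any superset of a norm-retrieval family does norm retrieval). Since our $\Phi$ contains an orthonormal basis, this applies directly. Then Lemma~\ref{L:perpnormretrieval} upgrades this to phase retrieval for $\{\phi_i^\perp\}_{i=1}^M$, which is the conclusion.

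I do not expect a serious obstacle here — the corollary is essentially a bookkeeping composition of two already-established facts. The only point requiring a little care is aligning the notational conventions: the Proposition is stated for the hyperplanes $\phi_i^\perp$ while Lemma~\ref{L:perpnormretrieval} is phrased for an arbitrary family $\{W_i\}$ and its complements $\{W_i^\perp\}$, so one must set $W_i = \lspan\{\phi_i\}$ (a line), note $\{W_i\}$ does phase retrieval because $\{\phi_i\}$ does phase retrieval as vectors, and then read the lemma's conclusion about $\{W_i^\perp\} = \{\phi_i^\perp\}$. Once that identification is made explicit, the proof is two lines.
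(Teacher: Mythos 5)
Your proposal is correct and follows essentially the same route as the paper: the paper likewise notes that the orthogonal complements of the orthonormal basis vectors do norm retrieval, hence the larger family $\{\phi_i^\perp\}_{i=1}^M$ does norm retrieval, and then invokes Lemma~\ref{L:perpnormretrieval} to upgrade this to phase retrieval. The notational identification $W_i=\lspan\{\phi_i\}$ that you flag is exactly the reading the paper intends.
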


\begin{proof}
If $\{\phi_i\}_{i\in I}$ is an orthonormal basis, then $\{\phi_i^\perp\}_{i\in I}$ does norm retrieval. Hence so does the larger set $\{\phi_i^\perp\}_{i=1}^M$. Since $\{\phi_i\}_{i=1}^M$ does phase retrieval, and $\{\phi_i^\perp\}_{i=1}^M$	does norm retrieval, we can conclude the latter does phase retrieval as well which follows from Lemma~\ref{L:perpnormretrieval}.
\end{proof}
The next result gives us a sufficient condition for the subspaces to do norm retrieval. It is enough to check if the identity is in the linear span of the projections in order for the subspaces to do norm retrieval. A similar result in the case of phase retrieval is proved in~\cite{CCJW}.

\begin{proposition}[~\cite{BCCJW}] Let $\{W_i\}_{i=1}^M$ be subspaces of $\RR^N$ with corresponding projections $\{P_i\}_{i=1}^M$. If there exist $a_i\in \RR$ such that $\sum_{i=1}^M a_i P_i=I$, then $\{ P_i\}_{i=1}^M$ does norm retrieval.
\end{proposition}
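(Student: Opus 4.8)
The plan is to show directly that the hypothesis $\sum_{i=1}^M a_i P_i = I$ forces $\sum_{i=1}^M a_i \|P_i x\|^2 = \|x\|^2$ for every $x \in \RR^N$, from which norm retrieval is immediate. First I would recall that each $P_i$ is an orthogonal projection, hence self-adjoint and idempotent, so $\langle P_i x, x\rangle = \langle P_i^2 x, x\rangle = \langle P_i x, P_i x\rangle = \|P_i x\|^2$ for all $x$. Applying the operator identity to $x$ and pairing with $x$ gives
\[
\|x\|^2 = \langle x, x\rangle = \Bigl\langle \sum_{i=1}^M a_i P_i x,\, x\Bigr\rangle = \sum_{i=1}^M a_i \langle P_i x, x\rangle = \sum_{i=1}^M a_i \|P_i x\|^2.
\]

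Next I would invoke the definition of norm retrieval (Definition~\ref{D:norm retrieval by projections}): suppose $x, y \in \RR^N$ satisfy $\|P_i x\| = \|P_i y\|$ for all $i = 1, 2, \ldots, M$. Then, using the displayed identity twice,
\[
\|x\|^2 = \sum_{i=1}^M a_i \|P_i x\|^2 = \sum_{i=1}^M a_i \|P_i y\|^2 = \|y\|^2,
\]
so $\|x\| = \|y\|$, which is exactly the conclusion that $\{P_i\}_{i=1}^M$ does norm retrieval.

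This argument is short and essentially obstruction-free; the only point requiring any care is the identity $\langle P_i x, x\rangle = \|P_i x\|^2$, which relies on $P_i$ being an \emph{orthogonal} projection (self-adjoint idempotent), not merely an idempotent — and this is exactly the convention fixed earlier in the paper. One could alternatively phrase the whole thing at the operator level: $\sum a_i P_i = I$ is self-adjoint, and the quadratic form $x \mapsto \sum a_i \|P_i x\|^2$ is the quadratic form of that operator, hence equals $\|x\|^2$; but the coordinate-free computation above is cleaner. I would also remark in passing that the converse fails in general (the earlier norm-retrieval examples with three codimension-one subspaces in $\RR^N$ need not have the identity in the real span of the projections), so the proposition gives a genuinely one-directional sufficient condition; this remark is optional and I would include it only if space permits.
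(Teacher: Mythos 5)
Your proof is correct and follows essentially the same route as the paper: both compute $\|x\|^2=\langle\sum_i a_iP_ix,x\rangle=\sum_i a_i\|P_ix\|^2$ using the self-adjointness and idempotence of each $P_i$, and then read off norm retrieval from the definition. Your version is in fact slightly more complete, since you explicitly spell out the final step comparing $x$ and $y$, which the paper leaves implicit.
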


\begin{proof}
Given $x\in R^N$, then
\begin{align*}
\|x\|^2=\langle x,x\rangle 
&=\big\langle \sum_{i=1}^M a_i P_i x, x\big\rangle=\sum_{i=1}^M a_i\langle P_i x,x\rangle\\ &=\sum_{i=1}^M a_i \langle P_i x,P_i x\rangle=\sum_{i=1}^M a_i\|P_ix\|^2.
\end{align*}
Since for each $i$ the coefficients $a_i$ and $\|P_i x\|$ are known, the collection $\{P_i\}_{i=1}^M$ does norm retrieval.
\end{proof}

A counter example for the converse of the above proposition is given in~\cite{BCCJW} where the authors construct a collection of projections, $P_i$, which do phase retrieval but $I\not\in \text{span } P_i$. Here, we provide another example for the same. We give a set of five vectors in $\RR^3$ which does phase retrieval; however the identity operator is not in the span of these vectors. We need the following theorem that provides a necessary and sufficient condition for a frame to be not scalable in $\RR^3$. Recall that a frame $\{\phi_i\}_{i=1}^M\in \RR^N$ is said to be scalable if there exists scalars $c_i\geq 0, i=1, 2, \ldots, M$ such that $\{c_i\phi_i\}_{i=1}^M$ is a Parseval frame~\cite{GiKaFrEl013}. Later in the next section, we prove that scalable frames always do norm retrieval.

\begin{theorem}\label{T:Gitta}\cite{GiKaFrEl013}
A frame $\phi$ in $\RR^3 - \{0\}$ for $\RR^3$ is not scalabale iff all frame vectors of $\phi$  are contained in an interior of an elliptical conical surface with vertex $0$ and intersecting the corners of a rotated unit cube.
\end{theorem}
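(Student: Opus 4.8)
The plan is to recast scalability as a convex‑geometric condition, apply a theorem of the alternative, and then read off the geometry. First I would normalize: replacing each $\phi_i$ by $\phi_i/\norm{\phi_i}$ affects neither scalability nor the hypothesis that the vectors lie in $\RR^3-\{0\}$, so assume every $\phi_i$ is a unit vector. Setting $t_i=c_i^2\ge 0$, the frame is scalable precisely when $\sum_i t_i\,\phi_i\phi_i^T=I$ for some $t_i\ge 0$; taking traces forces $\sum_i t_i=3$, so this is equivalent to asking for $t_i\ge 0$, not all zero, with $\sum_i t_iX_i=0$, where $X_i:=\phi_i\phi_i^T-\tfrac13 I$ lies in the $5$-dimensional real vector space $V$ of traceless symmetric $3\times3$ matrices.

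Next I would apply Gordan's theorem to the finite family $\{X_i\}\subset V$: exactly one of the two alternatives holds --- either there are $t_i\ge 0$, not all zero, with $\sum_i t_iX_i=0$, or there is a $Y\in V$ with $\ipt{X_i,Y}<0$ for every $i$. Since $\ipt{X_i,Y}=\phi_i^TY\phi_i-\tfrac13\Tr(Y)=\phi_i^TY\phi_i$ for $Y\in V$, the second alternative says exactly that there is a nonzero traceless symmetric $Y$ with $\phi_i^TY\phi_i<0$ for all $i$. Hence: \emph{the frame is not scalable if and only if all of its vectors lie in the open set $C_Y:=\set{v\in\RR^3:v^TYv<0}$ for some nonzero traceless symmetric $Y$.}

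It then remains to identify $C_Y$ with the interior of an elliptical cone through the corners of a rotated unit cube. A nonzero traceless $Y$ is indefinite. Because the solution set $\set{Y\in V:\phi_i^TY\phi_i<0 \text{ for all }i}$ is open (a finite intersection of open half-spaces) while the matrices in $V$ with a zero or repeated eigenvalue form a proper algebraic subvariety, in the non-scalable case we may choose such a $Y$ with three distinct nonzero eigenvalues $\lambda_1>\lambda_2>\lambda_3$, $\lambda_1+\lambda_2+\lambda_3=0$; thus $\lambda_1>0>\lambda_3$ and $\lambda_2\ne0$. In an orthonormal eigenbasis of $Y$, $C_Y=\set{(x,y,z):\lambda_1x^2+\lambda_2y^2+\lambda_3z^2<0}$, which is the (double-napped) interior of the elliptical cone $\lambda_1x^2+\lambda_2y^2+\lambda_3z^2=0$. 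A short linear computation shows that a quadratic form on $\RR^3$ vanishes at the eight points $(\pm1,\pm1,\pm1)$ if and only if it is diagonal (no cross terms) with coefficients summing to zero; consequently the eigenbasis of $Y$, taken as the axes of a unit cube centered at the origin, is a rotated unit cube whose eight corners lie on $\partial C_Y$, and conversely every elliptical cone through the corners of such a cube equals $\set{v^TYv=0}$ for some nonzero traceless $Y$ diagonal in the cube's axes. Tracing this equivalence in both directions --- from ``not scalable'' to the cone picture via the eigenbasis of $Y$, and from the cone picture back to a traceless separating $Y$ via the converse of the cube computation --- yields the theorem. (The degenerate case $\lambda_2=0$, a pair of half-spaces bounded by two planes still containing the cube corners, is either subsumed as a limiting ``elliptical'' cone or avoided by the perturbation above.)

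I expect the principal difficulty to be the bookkeeping linking the abstract duality to the concrete cube picture: one has to verify that ``the interior of an elliptical conical surface'' corresponds to the double-napped region $\set{v^TYv<0}$ rather than to its connected complement, to pin down exactly which traceless $Y$ is produced, and to track the sign and orientation choices so that the reverse implication genuinely yields a traceless separating matrix. By contrast the convexity step (Gordan's theorem), the normalization to unit vectors, and the cube-corner linear algebra are routine.
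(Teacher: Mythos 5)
First, a point of comparison: the paper does not prove this statement at all --- it is quoted verbatim from the reference [Kutyniok--Okoudjou--Philipp--Tuley] and used only to set up the example that follows, so there is no in-paper proof to measure you against. Judged on its own terms, the core of your argument is the right mechanism and is essentially the one used in that reference: normalizing to unit vectors, reducing scalability to $\sum_i t_i\bigl(\phi_i\phi_i^T-\tfrac13 I\bigr)=0$ with $t_i\ge 0$ not all zero, applying Gordan's theorem inside the space $V$ of traceless symmetric matrices, and computing $\ipt{X_i,Y}=\phi_i^TY\phi_i$ are all correct, as is the observation that a quadratic form vanishes at all eight corners of a cube exactly when it is diagonal with zero trace in the cube's axes (which is how ``intersects the corners of a rotated unit cube'' encodes tracelessness).

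The genuine gap is in the step you wave off as bookkeeping: identifying $C_Y=\set{v:v^TYv<0}$ with the \emph{interior} of the conical surface $\set{v:v^TYv=0}$. That is correct only when $Y$ has signature $(2,1)$ (two positive eigenvalues, one negative), in which case $C_Y$ is the double-napped region around the negative axis. When $Y$ has signature $(1,2)$, the surface is still an elliptical cone through the cube corners, but $C_Y$ is its \emph{exterior} --- the complement of the solid double cone around the positive axis --- and when an eigenvalue vanishes $C_Y$ is a wedge between two perpendicular planes. Your perturbation to distinct nonzero eigenvalues removes only the degenerate case; it does not control the sign of $\lambda_2$, which is exactly what decides interior versus exterior. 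This matters for the forward implication: Gordan's theorem hands you \emph{some} traceless separating $Y$, and there do exist nonempty open convex cones in $V$ consisting entirely of signature-$(1,2)$ matrices (e.g.\ the cone generated by a small ball around $\mathrm{diag}(2,-1,-1)$), so convexity and openness of the set of separators do not by themselves let you perturb into signature $(2,1)$. To close the argument you must either (a) prove that whenever a separator exists, a separator of signature $(2,1)$ exists --- a nontrivial claim about the cones $\set{Y\in V:\phi_i^TY\phi_i<0}$ that your proposal neither states nor uses the spanning hypothesis to establish --- or (b) weaken ``interior of the conical surface'' to ``one of the two open regions bounded by the surface,'' which is in effect how the multi-case statement in the cited reference handles it. As written, the forward direction of your proof only delivers the conclusion ``all $\phi_i$ lie on the negative side of some traceless quadratic form,'' which is strictly weaker than the interior-of-a-cone statement you set out to prove.
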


\begin{example}
A frame $\{\phi_i\}_{i=1}^5$ in $\RR^3$ which does phase retrieval but 
\[ \sum_{i=1}^5 a_i \phi_i \neq I, \mbox{ for any }a_i \in \RR.\]

Choose five full spark vectors in the cone referred in the previous theorem~\ref{T:Gitta}. These vectors do phase retrieval and hence norm retrieval in $\RR^3$. Now, given $a_i \in \RR$ , $\sum_{i=1}^5 a_i\phi_i=\sum_{i=1}^5 | a_i| (\epsilon_i \phi_i)$ for $ \epsilon_i = \pm 1$. But, $\epsilon_i \phi_i $ is still inside the cone for each $i$. Therefore $\sum_{i=1}^5 |a_i|(\epsilon_i \phi_i) \neq I.$
\end{example}

The next proposition gives a sufficient condition for the complements to do norm retrieval when the subspaces do.


\begin{proposition} If $\{W_i\}_{i=1}^L$ are subspaces of $\RR^N$ with corresponding projections $\{P_i\}_{i=1}^L$ such that $\sum_{i=1}^La_iP_i=I$ and $\sum_{i=1}^La_i\neq 1$. Then $\{ I-P_i\}_{i=1}^L$ does norm retrieval.
\end{proposition}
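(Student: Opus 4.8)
The plan is to reduce the statement for $\{I-P_i\}_{i=1}^L$ to an application of the earlier proposition that says: if the identity lies in the linear span of a family of projections, then that family does norm retrieval. So the whole task is to produce real scalars $b_i$ with $\sum_{i=1}^L b_i (I-P_i) = I$, given that $\sum_{i=1}^L a_i P_i = I$ and $\sigma := \sum_{i=1}^L a_i \neq 1$.

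First I would write $\sum_{i=1}^L b_i (I - P_i) = \left(\sum_{i=1}^L b_i\right) I - \sum_{i=1}^L b_i P_i$ and look for $b_i$ of the form $b_i = \lambda a_i$ for a single scalar $\lambda$ to be determined, since the only projection-combination we control is $\sum a_i P_i = I$. With that ansatz, $\sum_i b_i P_i = \lambda \sum_i a_i P_i = \lambda I$, and $\sum_i b_i = \lambda \sigma$, so the left-hand side becomes $(\lambda \sigma - \lambda) I = \lambda(\sigma - 1) I$. Setting this equal to $I$ forces $\lambda = \dfrac{1}{\sigma - 1}$, which is well-defined precisely because $\sigma = \sum_{i=1}^L a_i \neq 1$. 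Thus $b_i = \dfrac{a_i}{\sigma - 1}$ works, and $\sum_{i=1}^L b_i (I - P_i) = I$.

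Having exhibited the identity as a real-linear combination of the projections $I - P_i$ onto $W_i^{\perp}$, I would then simply invoke the proposition from~\cite{BCCJW} (the one stating $\sum a_i P_i = I \Rightarrow \{P_i\}$ does norm retrieval), applied to the family $\{I - P_i\}_{i=1}^L$, to conclude that $\{I - P_i\}_{i=1}^L$ does norm retrieval. That completes the argument.

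I do not expect any genuine obstacle here: the statement is essentially an algebraic observation, and the hypothesis $\sum a_i \neq 1$ is exactly the non-degeneracy condition that makes the scalar $\lambda$ exist. The only point requiring a word of care is to note that the $b_i$ are real (immediate, since $a_i \in \RR$ and $\sigma - 1 \in \RR \setminus \{0\}$), so that the cited proposition, which is stated over $\RR$, applies verbatim. One could optionally remark on the geometric meaning — $\sum a_i \neq 1$ rules out the situation where subtracting the $P_i$-relation from a multiple of $I$ collapses to $0 = 0$ — but that is not needed for the proof.
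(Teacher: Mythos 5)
Your proposal is correct and is essentially identical to the paper's own proof: the paper computes $\sum_{i=1}^L a_i(I-P_i)=\bigl(\sum_{i=1}^L a_i-1\bigr)I=\alpha I$ and then rescales to $\sum_{i=1}^L \frac{a_i}{\alpha}(I-P_i)=I$, which is exactly your choice $b_i=a_i/(\sigma-1)$, before invoking the same proposition from~\cite{BCCJW}. No further comment is needed.
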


\begin{proof}
Observe the following
\[ \sum_{i=1}^La_i (I-P_i)=\left(\sum_{i=1}^L a_i\right)I-\sum_{i=1}^La_i P=\left( \sum_{i=1}^L a_i\right) I-I=\left( \sum_{i=1}^La_i - 1\right)I.\]
Let $\alpha=\sum_{i=1}^La_i - 1$ then a short calculation shows $\sum_{i=1}^L\frac{a_i}{\alpha} (I-P_i)=I$. By the previous proposition this shows $\{I-P_i\}_{i=1}^L$ does norm retrieval.
\end{proof}

It is possible that $\sum a_i P_i=I=\sum b_i P_i$ with $\sum a_i=1$ but $\sum b_i\neq 1$, as we will see in the following example.

\begin{example}
Let $\{e_i\}_{i=1}^3$ be an orthonormal basis for $\RR^3$. Now let
\begin{alignat*}{2}
\\ &W_1=span\{e_1\} &\qquad & {W_1}^\perp= span \{ e_2,e_3\}
\\ &W_2=span\{e_2\} &\qquad & {W_2}^\perp= span \{ e_1,e_3\}
\\ &W_3=span\{e_3\} &\qquad & {W_3}^\perp= span \{ e_1,e_2\}
\\ &W_4=span\{e_1,e_2\} &\qquad & {W_4}^\perp= span \{ e_3\}
\\ &W_5=span\{e_1,e_3\} &\qquad & {W_5}^\perp= span \{ e_2\}
\end{alignat*}

Both $\{W_i\}$ and $\{W_i^\perp\}$ do norm retrieval. Let $P_i$ denote the projections on to $W_i$, then $\sum_{i=1}^5a_iP_i=P_1+P_2+P_3+0\cdot P_4+0\cdot P_5=I$ and $\sum_{i=1}^5 b_iP_i=-P_1+0\cdot P_2+0\cdot P_3+P_4+P_5=I$. However, $\sum_{i=1}^5 a_i=3\neq 1 =\sum_{i=1}^5 b_i$.
\end{example}

\section{Classification of Norm Retrieval}\label{s:classifications_normret}

In this section, we give classifications of norm retrieval by projections.
The following theorem in~\cite{HF} uses the span of the frame elements to classify norm retrievable frames in $\RR^N$.

\begin{theorem}(\cite{HF})\label{T:Fard}
A frame $\{\phi_k\}_{k=1}^M\subset \RR^N$ does norm retrieval if and only if for any partition $\{I_j\}_{j=1}^2$ of $[M]:=\{1,2,...,M\}$, $span\ \{\ \phi_k\}_{k\in I_1}^\perp \perp span\ \{\phi_k\}_{k\in I_2}^\perp$.
\end{theorem}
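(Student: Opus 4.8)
The plan is to characterize norm retrieval directly from the identity $\|x\|^2 = \sum_k |\langle x,\phi_k\rangle|^2 + \text{(something)}$, but since the frame need not be tight we cannot use that directly. Instead I would use the standard reformulation: $\{\phi_k\}_{k=1}^M$ does norm retrieval if and only if for all $x,y\in\RR^N$ with $|\langle x,\phi_k\rangle| = |\langle y,\phi_k\rangle|$ for all $k$, we have $\|x\| = \|y\|$. The key device is to pass to the vectors $u = x+y$ and $v = x-y$; then $|\langle x,\phi_k\rangle|^2 = |\langle y,\phi_k\rangle|^2$ is equivalent to $\langle u,\phi_k\rangle\langle v,\phi_k\rangle = 0$ (in the real case $\langle x,\phi_k\rangle^2 - \langle y,\phi_k\rangle^2 = \langle u,\phi_k\rangle\langle v,\phi_k\rangle$), while $\|x\|^2 = \|y\|^2$ is equivalent to $\langle u,v\rangle = 0$. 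So norm retrieval is precisely the statement: whenever $u,v\in\RR^N$ satisfy $\langle u,\phi_k\rangle\langle v,\phi_k\rangle = 0$ for every $k$, then $u\perp v$.

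Next I would translate the hypothesis $\langle u,\phi_k\rangle\langle v,\phi_k\rangle = 0$ for all $k$ into a partition statement. For each $k$, at least one of $\langle u,\phi_k\rangle = 0$ or $\langle v,\phi_k\rangle = 0$ holds; let $I_1 = \{k : \langle u,\phi_k\rangle = 0\}$ and $I_2 = [M]\setminus I_1 \subseteq \{k : \langle v,\phi_k\rangle = 0\}$. Then $u \in \spann\{\phi_k\}_{k\in I_1}^\perp$ and $v \in \spann\{\phi_k\}_{k\in I_2}^\perp$. Conversely, any $u$ in the first orthogonal complement and $v$ in the second, for any partition $\{I_1,I_2\}$, satisfy the hypothesis. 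Hence: $\{\phi_k\}$ does norm retrieval $\iff$ for every partition $\{I_1,I_2\}$ of $[M]$ and every $u\in\spann\{\phi_k\}_{k\in I_1}^\perp$, $v\in\spann\{\phi_k\}_{k\in I_2}^\perp$, one has $u\perp v$ $\iff$ for every partition, $\spann\{\phi_k\}_{k\in I_1}^\perp \perp \spann\{\phi_k\}_{k\in I_2}^\perp$. That is exactly the asserted condition.

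The two directions then fall out of this equivalence chain. For necessity, given a partition with non-orthogonal complements, choose $u,v$ in the respective complements with $\langle u,v\rangle\neq 0$, set $x = (u+v)/2$, $y=(u-v)/2$; these witness failure of norm retrieval. For sufficiency, given $x,y$ with $|\langle x,\phi_k\rangle| = |\langle y,\phi_k\rangle|$, form $u=x+y$, $v=x-y$, extract the partition as above, apply the hypothesis to get $u\perp v$, hence $\|x\|^2=\|y\|^2$.

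The main obstacle, and the only place that needs care, is that the polarization identity $\langle x,\phi_k\rangle^2 - \langle y,\phi_k\rangle^2 = \langle x+y,\phi_k\rangle\langle x-y,\phi_k\rangle$ and $\|x\|^2 - \|y\|^2 = \langle x+y, x-y\rangle$ are genuinely real-variable identities; this is why the theorem is stated for $\RR^N$. I would make sure to note this, since in the complex case $|\langle x,\phi_k\rangle| = |\langle y,\phi_k\rangle|$ does not reduce to a bilinear vanishing condition in $x\pm y$, so the partition reformulation breaks down. Beyond that caveat, each implication is a short unwinding of definitions, so I expect no serious difficulty.
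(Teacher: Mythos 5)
Your proof is correct. The paper imports this statement from \cite{HF} without reproving it, but your polarization/partition-by-sign argument (pass to $u=x+y$, $v=x-y$, note $|\langle x,\phi_k\rangle|=|\langle y,\phi_k\rangle|$ iff $\langle u,\phi_k\rangle\langle v,\phi_k\rangle=0$, and split $[M]$ according to which factor vanishes) is exactly the mechanism the paper uses to prove its generalization to projections, Theorem~\ref{T:classificationofNormRet}, so this is essentially the same approach.
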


Next, we prove one of the main results of this paper. This is an extension of the previous Theorem~\ref{T:Fard} and it fully classifies the subspaces of $\RR^N$ which do norm retrieval.

\begin{theorem}\label{T:classificationofNormRet}
Let $\{P_i\}_{i=1}^M$ be projections onto subspaces $\{W_i\}_{i=1}^M$ of $\RR^N$.  Then the following are equivalent:
\begin{enumerate}
\item $\{P_i\}_{i=1}^M$ does norm retrieval,
\item Given any orthonormal bases $\{\phi_{i,j}\}_{j=1}^{I_i}$ of $W_i$ and any subcollection $S\subseteq \{(i,j):1\leq i\leq M, 1\leq j\leq I_i\}$ then
\[ span\ \{\phi_{ij}\}_{(i,j)\in S}^\perp
\perp span\ \{\phi_{ij}\}_{(i,j)\in S^c}^\perp,\]
\item For any orthonormal basis $\{\phi_{i,j}\}_{j=1}^{I_i}$ of $W_i$, then the collection of vectors $\{\phi_{i,j}\}_{(i,j)}$ do norm retrieval.
\end{enumerate}
\end{theorem}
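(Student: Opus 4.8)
The plan is to establish the cycle (3) $\Rightarrow$ (1) $\Rightarrow$ (2) $\Rightarrow$ (3), leaning on Theorem~\ref{T:Fard} for the vector-to-subspace bridge and on the simple observation that $\|P_i x\|^2 = \sum_j \absipt{x,\phi_{i,j}}^2$ when $\{\phi_{i,j}\}_j$ is an orthonormal basis of $W_i$. The implication (3) $\Rightarrow$ (1) is the easiest: if the pooled vector system $\{\phi_{i,j}\}_{(i,j)}$ does norm retrieval and $\|P_i x\| = \|P_i y\|$ for all $i$, then squaring and expanding each projection norm in its orthonormal basis gives $\sum_j \absipt{x,\phi_{i,j}}^2 = \sum_j \absipt{y,\phi_{i,j}}^2$ for each $i$; this is weaker than having $\absipt{x,\phi_{i,j}} = \absipt{y,\phi_{i,j}}$ termwise, so a little care is needed — but I would instead argue directly that $\sum_{(i,j)} \absipt{x,\phi_{i,j}}^2 = \sum_i \|P_i x\|^2$, and since the vector system does norm retrieval via the frame-operator computation (really we only need: equality of these sums forces $\|x\|=\|y\|$, which is exactly what "norm retrieval by these vectors" delivers once we know the $W_i$-wise sums agree). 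So (1) follows.

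For (1) $\Rightarrow$ (2): assume $\{P_i\}$ does norm retrieval. Fix orthonormal bases $\{\phi_{i,j}\}_j$ of each $W_i$ and let $\Phi = \{\phi_{i,j}\}_{(i,j)}$ be the pooled collection; this is a frame for $\RR^N$ precisely when $\sum_i \dim W_i$ spans, and the key point is that $\Phi$ does norm retrieval as a vector system — indeed $\sum_{(i,j)} \absipt{x,\phi_{i,j}}^2 = \sum_i \|P_i x\|^2$, so $\absipt{x,\phi_{i,j}} = \absipt{y,\phi_{i,j}}$ for all $(i,j)$ forces $\|P_i x\| = \|P_i y\|$ for all $i$, hence $\|x\| = \|y\|$. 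Now apply Theorem~\ref{T:Fard} to the frame $\Phi$: it does norm retrieval iff for every partition $\{S, S^c\}$ of the index set $\{(i,j)\}$ one has $\spann\{\phi_{i,j}\}_{(i,j)\in S}^\perp \perp \spann\{\phi_{i,j}\}_{(i,j)\in S^c}^\perp$. That is exactly statement (2). One subtlety: Theorem~\ref{T:Fard} is stated for frames (spanning sets); if $\sum_i \dim W_i$ does not span $\RR^N$ we should either restrict to the span of $\Phi$ or note both sides of the perpendicularity contain the common complement, so the statement still reads correctly — I would add a line handling this degenerate case.

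Finally (2) $\Rightarrow$ (3): statement (2) says the pooled orthonormal-basis vectors $\Phi$ satisfy the partition/orthogonality criterion of Theorem~\ref{T:Fard}, and that theorem's converse direction then says $\Phi$ does norm retrieval — which is (3). The only real work is checking that the index set over which Theorem~\ref{T:Fard} quantifies partitions matches the index set $\{(i,j)\}$ appearing in (2), and confirming that "norm retrieval by the vectors $\{\phi_{i,j}\}$" is literally the hypothesis of Theorem~\ref{T:Fard}. The main obstacle I anticipate is purely bookkeeping: making the reduction from projections to their orthonormal-basis vectors airtight (the identity $\sum_i \|P_i x\|^2 = \sum_{(i,j)} \absipt{x,\phi_{i,j}}^2$ is the engine, and it must be invoked in both directions), together with the spanning-versus-nonspanning caveat for applying Theorem~\ref{T:Fard}. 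No genuinely hard inequality or construction is needed; it is a matter of correctly threading Theorem~\ref{T:Fard} through the translation between subspaces and their orthonormal bases.
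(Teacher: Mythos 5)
The cycle you propose breaks at $(3)\Rightarrow(1)$. By definition, norm retrieval by the pooled vectors $\{\phi_{i,j}\}_{(i,j)}$ is the implication: if $|\langle x,\phi_{i,j}\rangle|=|\langle y,\phi_{i,j}\rangle|$ for \emph{every} $(i,j)$, then $\|x\|=\|y\|$. It says nothing about pairs $x,y$ for which only the blockwise sums $\sum_j|\langle x,\phi_{i,j}\rangle|^2=\|P_ix\|^2$ agree, which is all that the hypothesis of $(1)$ hands you. Your parenthetical claim that ``equality of these sums forces $\|x\|=\|y\|$, which is exactly what norm retrieval by these vectors delivers'' asserts precisely the implication $(3)\Rightarrow(1)$ you are trying to prove; it is not a consequence of the definition, and it cannot be rescued by passing to the total sum, since a norm-retrieving vector system need not be tight (e.g.\ $\{e_1,e_2,e_1+e_2\}$ in $\RR^2$ does phase retrieval, hence norm retrieval, but its frame operator is not a multiple of the identity, so $\sum_{(i,j)}|\langle x,\phi_{i,j}\rangle|^2$ does not determine $\|x\|$: the vectors $(1,0)$ and $(1,-1)$ give the same total sum but different norms). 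The missing idea is an adapted-basis construction: given $\|P_ix\|=\|P_iy\|$, the vectors $u=P_ix$ and $v=P_iy$ lie in $W_i$ and have equal norms, so $u+v\perp u-v$; choosing an orthonormal basis of $W_i$ whose elements lie in $\lspan\{u+v\}$, in $\lspan\{u-v\}$, or orthogonal to both gives $|\langle x,\phi_{i,j}\rangle|=|\langle y,\phi_{i,j}\rangle|$ \emph{termwise}, and only then does $(3)$ --- which quantifies over all orthonormal bases --- apply. This is the same device the paper uses in its $(2)\Rightarrow(1)$ step, where the sign pattern splits the index set into the sets $A$ and $B$.

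The other two legs are sound and take a genuinely different route from the paper. The paper proves $(1)\Rightarrow(2)$ directly by polarization: for $x$ and $y$ orthogonal to the two complementary spans, $\|P_i(x+y)\|=\|P_i(x-y)\|$ for all $i$, so norm retrieval forces $\langle x,y\rangle=0$; Theorem~\ref{T:Fard} is invoked only to attach $(3)$. You instead funnel both $(1)\Rightarrow(2)$ and $(2)\Rightarrow(3)$ through Theorem~\ref{T:Fard} applied to the pooled system, which is legitimate once the spanning caveat you raise is settled: $(1)$ forces the $W_i$ together to span $\RR^N$ (otherwise a nonzero vector orthogonal to every $W_i$ defeats norm retrieval), and $(2)$ with $S$ equal to the whole index set forces the pooled vectors to span as well, so Fard's theorem applies in both directions. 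Your organization buys a shorter argument for $(1)\Rightarrow(2)$ at the cost of outsourcing the polarization work to the cited theorem; the paper's version is self-contained. Repair $(3)\Rightarrow(1)$ as above and the proof closes.
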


\begin{proof}
$(1)\Rightarrow (2)$:  Suppose $x \in span\ \{\phi_{ij}\}_{(i,j)\in S}^\perp $, and $y \in span \ \{\phi_{ij}\}_{(i,j)\in S^c}^\perp$ and let $I=[M]$ then,

\begin{align*}
\|P_i(x+y)\|^2&= \sum_{j=1}^{I_i}|\langle x+y,\phi_{ij}
\rangle|^2\\
 &= \sum_{j\in I\cap I_i}|\langle y,\phi_i\rangle|^2
 + \sum_{j\in I^c\cap I_i}|\langle x,\phi_i\rangle|^2\\
 &= \sum_{j=1}^{I_i}|\langle x-y,\phi_{ij}
\rangle|^2\\
&=\|P_i(x-y)\|^2
\end{align*}
Since $\{P_i\}_{i=1}^M$ does norm retrieval, we have
\[ \|x+y\|^2 = \|x\|^2+\|y\|^2 +2\langle x,y\rangle
=\|x-y\|^2 = \|x\|^2+\|y\|^2-2\langle x,y\rangle,\]
and so $\langle x,y\rangle =0$.
\vskip12pt
$(2)\Rightarrow (1)$: Assume that $\norm{P_ix} = \norm{P_iy}$ for all $1\le i\le M.$ Then, we can find a basis $(\phi_{ij})_{j=1}^{K_i}$ for $W_i$ such that
\[
\abs{\inner{\phi_{ij}}{x}} = \abs{\inner{\phi_{ij}}{y}}.
\]
Denote $A=\set{(i,j)\ :\ \inner{\phi_{ij}}{x}=\inner{\phi_{ij}}{y}}$ and $B=\set{(i,j)\ :\ \inner{\phi_{ij}}{x}=-\inner{\phi_{ij}}{y}}.$ Now we can see that $$(x-y)\bot \mathrm{span}\set{\phi_{ij}\ :\ (i,j)\in A}$$ and also $$(x+y)\bot \mathrm{span}\set{\phi_{ij}\ :\ (i,j)\in B}.$$
By (2), we must have that $\inner{x+y}{x-y}=0,$ which implies that $x$ and $y$ have the same norm.\\
The third equivalence is immediate from the result in Theorem~(\ref{T:Fard}).
\end{proof}

\begin{corollary}
If $\Phi=\{\phi_i\}_{i=1}^M$ does norm retrieval then $\Phi '=\{c_i\phi_i\}_{i=1}^M,$ $c_i\neq 0$ does norm retrieval. Hence all scalable frames do norm retrieval.
\end{corollary}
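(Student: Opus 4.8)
The plan is to reduce the corollary to Theorem~\ref{T:classificationofNormRet}, specifically to the characterization in condition (2), and then to observe that rescaling the frame vectors by nonzero scalars does not affect that condition. First I would fix a partition $\{I_1,I_2\}$ of $[M]$ and look at what Theorem~\ref{T:classificationofNormRet}(2) (equivalently, Theorem~\ref{T:Fard}) asks for: that $\spann\{\phi_k\}_{k\in I_1}^\perp \perp \spann\{\phi_k\}_{k\in I_2}^\perp$. The crucial elementary fact is that for any nonzero scalars $c_k$, one has $\spann\{c_k\phi_k\}_{k\in I_1} = \spann\{\phi_k\}_{k\in I_1}$, since scaling a vector by a nonzero constant changes neither its span contribution nor the span of a set containing it. Hence the orthogonal complements appearing in the condition are literally unchanged when we pass from $\Phi$ to $\Phi'$.

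Carrying this out: since $\Phi$ does norm retrieval, Theorem~\ref{T:Fard} gives that for every partition $\{I_1,I_2\}$ of $[M]$ we have $\spann\{\phi_k\}_{k\in I_1}^\perp \perp \spann\{\phi_k\}_{k\in I_2}^\perp$. For the same partition, $\spann\{c_k\phi_k\}_{k\in I_j}^\perp = \spann\{\phi_k\}_{k\in I_j}^\perp$ for $j=1,2$ by the remark above (note $c_k\neq 0$ is exactly what is needed here; if some $c_k=0$ the span could shrink). Therefore the orthogonality condition holds verbatim for $\Phi'$, and applying Theorem~\ref{T:Fard} in the other direction shows $\Phi'$ does norm retrieval.

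For the last sentence, recall a scalable frame is by definition a frame $\{\phi_i\}_{i=1}^M$ for which there are scalars $c_i\geq 0$ with $\{c_i\phi_i\}_{i=1}^M$ a Parseval frame; one may assume $c_i>0$ for the indices that matter (indices with $c_i=0$ may be deleted, and the remaining vectors still form a Parseval, hence tight, frame which does norm retrieval by the earlier theorem that tight frames do norm retrieval). Since Parseval frames are tight, $\{c_i\phi_i\}$ does norm retrieval; then apply the first part of the corollary with the nonzero scalars $1/c_i$ to conclude that $\{\phi_i\}$ itself does norm retrieval.

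I do not expect a genuine obstacle here: the content is entirely carried by Theorem~\ref{T:classificationofNormRet}/Theorem~\ref{T:Fard}, and the only point requiring a word of care is the bookkeeping around whether the scalars are allowed to be zero — for the main implication they must be nonzero (as hypothesized), while for the "scalable frames" consequence one should note that zero scalars can simply be discarded without affecting norm retrieval of the resulting (still Parseval, hence tight) subcollection, after which the nonzero-scalar statement applies.
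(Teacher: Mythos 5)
Your proposal is correct and follows essentially the same route as the paper: the paper likewise observes that the conditions of Theorem~\ref{T:classificationofNormRet} (equivalently Theorem~\ref{T:Fard}) depend only on the spans $\spann\{\phi_k\}_{k\in I_j}$, which are unchanged under nonzero scaling, and then deduces the scalable case by applying the first part with the reciprocal scalars to a Parseval (hence tight, hence norm-retrieving) frame. Your extra care about discarding indices with $c_i=0$ is a reasonable bookkeeping point the paper leaves implicit.
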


\begin{proof}
This is an immediate result of Theorem~\ref{T:classificationofNormRet}. Observe the conditions in Theorem~\ref{T:classificationofNormRet} do not depend on the norm of each vector $\phi_i$.
\end{proof}

For the complex case we have:

\begin{proposition}
If $\{P_i\}_{i=1}^M$ does norm retrieval, then whenever we choose orthonormal bases $\{\phi_{i,j}\}_{j=1}^{I_i}$ of $W_i$ and any subcollection $S\subseteq \{(i,j):1\leq i\leq M, 1\leq j\leq I_i\}$ then
\[ x \perp span\ \{\phi_{ij}\}_{(i,j)\in S} \mbox{ and } y \perp span\ \{\phi_{ij}\}_{(i,j)\in S^c}\mbox{ implies } Re\langle x,y\rangle =0.\]
\end{proposition}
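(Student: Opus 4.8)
The plan is to mimic the real-case argument from the implication $(1)\Rightarrow(2)$ of Theorem~\ref{T:classificationofNormRet}, but keeping careful track of the fact that in the complex setting the polarization identity only controls the real part of the inner product. First I would fix orthonormal bases $\{\phi_{i,j}\}_{j=1}^{I_i}$ of each $W_i$ and a subcollection $S$, and take $x\perp \spann\{\phi_{ij}\}_{(i,j)\in S}$ and $y\perp \spann\{\phi_{ij}\}_{(i,j)\in S^c}$. The key observation is that for each fixed $i$, the index pair $(i,j)$ lies either in $S$ or in $S^c$; hence for each $j$ either $\langle x,\phi_{ij}\rangle=0$ or $\langle y,\phi_{ij}\rangle=0$. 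Therefore $\langle x+y,\phi_{ij}\rangle$ and $\langle x-y,\phi_{ij}\rangle$ agree up to sign for each $(i,j)$, so $|\langle x+y,\phi_{ij}\rangle|=|\langle x-y,\phi_{ij}\rangle|$, and summing the squares over $j$ gives $\|P_i(x+y)\|^2=\|P_i(x-y)\|^2$ for every $i$.

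Next, since $\{P_i\}_{i=1}^M$ does norm retrieval, this forces $\|x+y\|=\|x-y\|$. Expanding both sides via the (complex) polarization identity,
\[
\|x+y\|^2=\|x\|^2+\|y\|^2+2\,\mathrm{Re}\langle x,y\rangle,\qquad
\|x-y\|^2=\|x\|^2+\|y\|^2-2\,\mathrm{Re}\langle x,y\rangle,
\]
and equating the two yields $\mathrm{Re}\langle x,y\rangle=0$, which is exactly the claimed conclusion.

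The step that requires the most care—and the reason the statement is weaker than its real counterpart—is that in $\CC^N$ one cannot also test with $x+iy$ to recover $\mathrm{Im}\langle x,y\rangle$: replacing $y$ by $iy$ does not preserve the orthogonality condition $y\perp\spann\{\phi_{ij}\}_{(i,j)\in S^c}$ in a way that keeps $|\langle iy,\phi_{ij}\rangle|=|\langle x,\phi_{ij}\rangle|$ on the relevant index set, so the trick used in the real case to conclude full orthogonality simply is not available. I would make this explicit as a remark so the reader sees why only $\mathrm{Re}\langle x,y\rangle=0$ survives. Beyond that, the argument is a direct transcription of the real proof, with the only genuine content being the per-index dichotomy that makes $|\langle x\pm y,\phi_{ij}\rangle|$ independent of the sign.
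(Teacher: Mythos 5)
Your proof is correct and follows essentially the same route as the paper: the per-index dichotomy gives $\|P_i(x+y)\|=\|P_i(x-y)\|$ for each $i$, norm retrieval then forces $\|x+y\|=\|x-y\|$, and polarization yields $\mathrm{Re}\langle x,y\rangle=0$. Your added remark explaining why the imaginary part cannot be recovered is a nice clarification but does not change the argument.
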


\begin{proof}
Given $x,y$ as above,
\[ |\langle x+y,\phi_{ij}\rangle|=|\langle x-y,\phi{ij}\rangle|, \mbox{ for all (i,j)}.\]
Since our vectors do norm retrieval, we have
\[ \|x+y\|^2 = \|x\|^2+\|y\|^2 +2\langle x,y\rangle =\|x-y\|^2 = \|x\|^2+\|y\|^2-2\langle x,y\rangle,\]
and so $Re\langle x,y\rangle =0$.
\end{proof}

We use Theorem~\ref{T:classificationofNormRet} to give a simple proof of a result in~\cite{CCJW} which has a very complicated proof in that paper.

\begin{corollary}\label{C:Normret_orthogonal}
If $\{\phi_i\}_{i=1}^N$ do norm retrieval in $R^N$, then the vectors are orthogonal.
\end{corollary}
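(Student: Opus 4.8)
The plan is to deduce Corollary~\ref{C:Normret_orthogonal} directly from the classification Theorem~\ref{T:classificationofNormRet}, using the equivalence $(1)\Leftrightarrow(2)$ applied to the rank-one projections $P_i$ onto $W_i=\mathrm{span}\{\phi_i\}$. Since each $W_i$ is one-dimensional, the orthonormal basis of $W_i$ is just the single unit vector $\phi_i/\|\phi_i\|$ (and by the scaling corollary we may as well assume $\|\phi_i\|=1$), so the index set in the theorem is simply $\{1,\dots,N\}$ and any subcollection $S\subseteq[N]$ gives the condition $\mathrm{span}\{\phi_i\}_{i\in S}^\perp\perp\mathrm{span}\{\phi_i\}_{i\in S^c}^\perp$.

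The key step is to choose $S$ well. Fix two indices, say $1$ and $2$; I would take $S=\{1\}$ and hence $S^c=\{2,3,\dots,N\}$, or alternatively split so that one side is a single vector. Actually the cleanest choice: to show $\phi_k\perp\phi_\ell$ for a fixed pair $k\neq\ell$, set $S=[N]\setminus\{k\}$ and $S^c=\{k\}$. Then $\mathrm{span}\{\phi_i\}_{i\in S^c}^\perp=\phi_k^\perp$, a hyperplane, while $\mathrm{span}\{\phi_i\}_{i\in S}^\perp$ is the orthogonal complement of the span of the other $N-1$ vectors. The point is to locate a nonzero vector in the latter space that is \emph{not} in $\phi_k^\perp$, which would force a contradiction unless the geometry is rigid; pushing this through for all pairs should yield mutual orthogonality. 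A slicker route: since $\{\phi_i\}_{i=1}^N$ does norm retrieval it spans $\RR^N$ (by the dimension-count proposition, $\sum\dim W_i=N$ forces the spans to fill the space, so the $\phi_i$ form a basis), hence for each $k$ the space $\mathrm{span}\{\phi_i\}_{i\neq k}^\perp$ is exactly one-dimensional, spanned by some $\psi_k\neq0$ with $\psi_k\perp\phi_i$ for all $i\neq k$ and $\langle\psi_k,\phi_k\rangle\neq0$. Applying condition $(2)$ with $S=[N]\setminus\{k\}$, $S^c=\{k\}$ gives $\psi_k\perp\phi_k^\perp$, so $\psi_k$ is a scalar multiple of $\phi_k$; therefore $\phi_k$ itself is orthogonal to every $\phi_i$ with $i\neq k$, and since $k$ was arbitrary the family is orthogonal.

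I would write it in this second form. The main (really the only) obstacle is justifying that the $\phi_i$ are a basis and that $\mathrm{span}\{\phi_i\}_{i\neq k}^\perp$ is one-dimensional: linear independence follows because if they were dependent their span would have dimension $<N$, and then one could pick $0\neq x$ orthogonal to all of them, making all $\|P_ix\|=0=\|P_i0\|$ while $\|x\|\neq\|0\|$, contradicting norm retrieval. Everything else is a one-line application of Theorem~\ref{T:classificationofNormRet}(2) with a single well-chosen partition, so the proof should be only a few lines — which matches the excerpt's claim that this replaces a ``very complicated proof'' in~\cite{CCJW}.
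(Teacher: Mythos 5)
Your proof is correct and takes essentially the same route as the paper: both arguments apply condition (2) of Theorem~\ref{T:classificationofNormRet} to the singleton partition $S=[N]\setminus\{k\}$, $S^c=\{k\}$. The paper phrases it as a contradiction, exhibiting an explicit witness pair $x\perp\mathrm{span}\{\phi_i\}_{i\ne k}$ and $y=x-\langle x,\phi_k\rangle\phi_k\perp\phi_k$ with $\langle x,y\rangle\ne 0$, while you argue directly that $\mathrm{span}\{\phi_i\}_{i\ne k}^{\perp}$ must sit inside $(\phi_k^{\perp})^{\perp}=\mathrm{span}\{\phi_k\}$; the content is identical.
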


\begin{proof}
Assume $\|\phi_i\|=1$ and that $\phi_j$ is not orthogonal
so span $\{\phi_i\}_{i\not= j}$.  Choose $x \perp
a_i$ for all $i\not= j$.  Let $y= x-\langle x,a_j\rangle a_j$.  Now,
\[ \langle a_j,y\rangle = \langle a_j,x\rangle - \langle x,
a_j\rangle\langle a_j, a_j\rangle=0.\]
Let $I=\{i:i\not= j\}$.  Then
\[ x \perp span\ \{a_i\}_{i\in I}\mbox{ and }
y\perp a_j,\]
but
\[ \langle x,y\rangle = \langle x,x\rangle -\langle x,a_j\rangle \langle x,a_j\rangle = 1-|\langle x,a_j\rangle|^2
\not= 0,\]
contradicting the theorem.
\end{proof}

\begin{corollary}
Consider a frame $\Phi= \{\phi_i\}_{i=1}^M$. The followings are equivalent:
\begin{enumerate}
\item $\Phi$ does norm retrieval.
\item  For $ i=1,2,\ldots,M$ if $W_1=span \{\phi_i\}_{i \in I}$ and $W_2=span \{\phi_i\}_{i \in I^c}$ then, ${W_1}^\perp \subseteq {W_2}$.
\end{enumerate}
\end{corollary}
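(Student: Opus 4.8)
The plan is to deduce this corollary directly from Theorem~\ref{T:Fard} together with the preceding Theorem~\ref{T:classificationofNormRet}, since the two conditions $W_1^\perp \subseteq W_2$ and $W_1^\perp \perp W_2^\perp$ are in fact the same statement once one recalls the standard identity $(W_1 \cap W_2)^\perp = W_1^\perp + W_2^\perp$ and the duality $W \subseteq V \iff V^\perp \subseteq W^\perp$. Concretely, for a fixed partition $I, I^c$ of $[M]$ with $W_1 = \spann\{\phi_i\}_{i\in I}$ and $W_2 = \spann\{\phi_i\}_{i\in I^c}$, I would show $W_1^\perp \subseteq W_2$ is equivalent to $W_1^\perp \perp W_2^\perp$, and then invoke Theorem~\ref{T:Fard}, which says $\Phi$ does norm retrieval iff $W_1^\perp \perp W_2^\perp$ holds for every partition.

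First I would record the elementary fact that for subspaces $V_1, V_2$ of $\RR^N$ and a subspace $U$, one has $U \perp V_2$ iff $U \subseteq V_2^\perp$. Applying this with $U = W_1^\perp$ and $V_2 = W_2^\perp$ gives $W_1^\perp \perp W_2^\perp \iff W_1^\perp \subseteq (W_2^\perp)^\perp = W_2$, where the last equality is just $(W_2^\perp)^\perp = W_2$ in finite dimensions. This is a biconditional that holds for each individual partition, so quantifying over all partitions $\{I, I^c\}$ of $[M]$, the condition ``$W_1^\perp \perp W_2^\perp$ for all partitions'' is literally the same as ``$W_1^\perp \subseteq W_2$ for all partitions.'' Then $(1)\Leftrightarrow$ (the former) is Theorem~\ref{T:Fard}, and the latter is exactly condition $(2)$ of the corollary (the statement ``for $i=1,\dots,M$'' there should be read as ``for every subset $I \subseteq [M]$'').

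The writing is essentially a two-line observation; I would present it as: fix a partition, prove the equivalence $W_1^\perp \subseteq W_2 \iff W_1^\perp \perp W_2^\perp$, note it holds for all partitions simultaneously, and cite Theorem~\ref{T:Fard}. I do not anticipate a genuine obstacle here — the only thing to be careful about is the quantifier matching between the corollary's phrasing and Theorem~\ref{T:Fard}'s phrasing (both range over all partitions of the index set $[M]$), and the fact that we are in $\RR^N$ so that $(V^\perp)^\perp = V$ and the orthogonality-containment duality hold without qualification.

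\begin{proof}
Fix a subset $I \subseteq [M]$ and let $W_1 = \spann\{\phi_i\}_{i\in I}$ and $W_2 = \spann\{\phi_i\}_{i\in I^c}$. For any subspaces of $\RR^N$ we have $W_1^\perp \perp W_2^\perp$ if and only if $W_1^\perp \subseteq (W_2^\perp)^\perp = W_2$, since in a finite dimensional Hilbert space $U \perp V$ is equivalent to $U \subseteq V^\perp$ and $(W_2^\perp)^\perp = W_2$. As $I$ ranges over all subsets of $[M]$, this shows that the condition ``$W_1^\perp \perp W_2^\perp$ for every partition $\{I, I^c\}$ of $[M]$'' is equivalent to condition $(2)$. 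By Theorem~\ref{T:Fard}, the former condition is equivalent to $\Phi$ doing norm retrieval, i.e. to condition $(1)$. Hence $(1) \Leftrightarrow (2)$.
\end{proof}
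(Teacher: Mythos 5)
Your proof is correct and follows essentially the same route as the paper: invoke the partition characterization of norm retrieval (the paper cites Theorem~\ref{T:classificationofNormRet}, you cite Theorem~\ref{T:Fard}, which is the same condition specialized to vectors) and then observe that $W_1^\perp \perp W_2^\perp$ is equivalent to $W_1^\perp \subseteq (W_2^\perp)^\perp = W_2$. Your version is, if anything, slightly more careful about the quantifier over partitions and the finite-dimensional duality being used.
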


\begin{proof}
By Theorem~\ref{T:classificationofNormRet}, it follows that $\Phi$ does norm retrieval if and only if ${W_1}^\perp \perp {W_2}^\perp$. This happens if and only if ${W_1}^\perp \subseteq {W_2}$. Hence the proof.
\end{proof}

Both phase retrieval and norm retrieval are preserved when applying projections to the vectors. Also, phase retrieval is preserved under the application of any invertible operator (refer to~\cite{BCCJW} for details). This is not the case with norm retrieval, in general. We prove this in the next corollary.

\begin{corollary}
Norm retrieval is not preserved under the application of an invertible operator, in general.
\end{corollary}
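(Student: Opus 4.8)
The plan is to exhibit a concrete family of vectors in $\RR^2$ which does norm retrieval, together with an invertible operator whose application destroys norm retrieval; by Corollary~\ref{C:Normret_orthogonal} the natural test case is $N=2$, where norm retrieval by two vectors forces orthogonality, so applying a non-orthogonal invertible map should break it. First I would take $\{e_1,e_2\}$, the standard orthonormal basis of $\RR^2$, which trivially does norm retrieval since $\sum_i|\langle x,e_i\rangle|^2=\|x\|^2$. Then I would pick an invertible operator $T$ that is not a scalar multiple of an orthogonal operator — for instance $T e_1 = e_1$, $T e_2 = e_1 + e_2$ (equivalently the matrix $\begin{pmatrix}1&1\\0&1\end{pmatrix}$) — and set $\phi_1 = Te_1 = e_1$, $\phi_2 = Te_2 = e_1+e_2$.

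The key step is then to verify that $\{\phi_1,\phi_2\}$ fails norm retrieval. By Corollary~\ref{C:Normret_orthogonal}, since $\phi_1$ and $\phi_2$ are two vectors in $\RR^2$ that are not orthogonal ($\langle \phi_1,\phi_2\rangle = 1 \neq 0$), they cannot do norm retrieval — so this is immediate and requires no computation. Alternatively, for a self-contained check one could produce explicit $x,y$ with $|\langle x,\phi_i\rangle| = |\langle y,\phi_i\rangle|$ for $i=1,2$ but $\|x\|\neq\|y\|$: solving $|x_1| = |y_1|$ and $|x_1+x_2| = |y_1+y_2|$ while $x_1^2+x_2^2 \neq y_1^2+y_2^2$ is easy, e.g. $x=(1,0)$ and $y=(1,-2)$ give $|\langle x,\phi_1\rangle| = 1 = |\langle y,\phi_1\rangle|$ and $|\langle x,\phi_2\rangle| = 1 = |\langle y,\phi_2\rangle|$ but $\|x\|^2 = 1 \neq 5 = \|y\|^2$.

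I would then note the conclusion: the orthonormal basis $\{e_1,e_2\}$ does norm retrieval, the operator $T$ is invertible, but $\{Te_1,Te_2\}$ does not do norm retrieval, so norm retrieval is not preserved under invertible operators. I do not anticipate a real obstacle here — the whole content is choosing a good example — but the one thing to be careful about is making the example as small and transparent as possible and invoking Corollary~\ref{C:Normret_orthogonal} (or the explicit $x,y$ computation) cleanly, so that the failure of norm retrieval is unambiguous rather than merely asserted.

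\begin{proof}
Let $\{e_1,e_2\}$ be the standard orthonormal basis of $\RR^2$, which does norm retrieval since $|\langle x,e_1\rangle|^2+|\langle x,e_2\rangle|^2=\|x\|^2$ for all $x\in\RR^2$. Let $T:\RR^2\to\RR^2$ be the invertible operator given by $Te_1=e_1$ and $Te_2=e_1+e_2$. Put $\phi_1=Te_1=e_1$ and $\phi_2=Te_2=e_1+e_2$. Then $\langle \phi_1,\phi_2\rangle=1\neq 0$, so $\{\phi_1,\phi_2\}$ is a family of $N=2$ non-orthogonal vectors in $\RR^2$, and by Corollary~\ref{C:Normret_orthogonal} it cannot do norm retrieval.

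Explicitly, take $x=(1,0)$ and $y=(1,-2)$. Then
\[
|\langle x,\phi_1\rangle|=1=|\langle y,\phi_1\rangle|,\qquad
|\langle x,\phi_2\rangle|=1=|\langle y,\phi_2\rangle|,
\]
while $\|x\|^2=1\neq 5=\|y\|^2$. Thus $\{e_i\}_{i=1}^2$ does norm retrieval but $\{Te_i\}_{i=1}^2$ does not, so norm retrieval is not preserved under the application of an invertible operator.
\end{proof}
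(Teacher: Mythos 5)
Your proof is correct and is essentially the paper's argument run in the forward direction: the paper starts with $N$ linearly independent non-orthogonal vectors in $\RR^N$ (which fail norm retrieval by Corollary~\ref{C:Normret_orthogonal}) and maps them by an invertible operator to an orthonormal basis, while you start with the orthonormal basis and apply the shear $T$ to land on a non-orthogonal pair. The concrete matrix and the explicit pair $x=(1,0)$, $y=(1,-2)$ are a nice self-contained verification, but the underlying mechanism (Corollary~\ref{C:Normret_orthogonal}) is the same.
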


\begin{proof}
Let $\phi=\{\phi_i\}_{i=1}^N$ be linearly independent vectors in $R^N$ which are not orthogonal. Then by Corollary~\ref{C:Normret_orthogonal}, $\Phi$ cannot do norm retrieval. But there exists an invertible operator $T$ on $R^N$ so that $\{T\phi_i\}_{i=1}^N$ is an orthonormal basis and so does norm retrieval.
\end{proof}

However, we note that unitary operators, which are invertible, do preserve norm retrieval.  

%

The following corollary about Parseval frames also holds in the infinite dimensional case with the same proof.
\begin{corollary}
If $\Phi$ is a Parseval frame, it does norm retrieval.  Hence, if we partition $\Phi$ into two disjoint sets, and choose a vector orthogonal to each set, then these vectors are orthogonal.
\end{corollary}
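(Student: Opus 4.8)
The plan is to prove the final corollary directly from the preceding results, since it is essentially a restatement of a known fact combined with Corollary~\ref{C:Normret_orthogonal}-type reasoning. First I would recall that a Parseval frame $\Phi = \{\phi_i\}_{i=1}^M$ has frame operator equal to the identity, so it is in particular a $1$-tight frame; by the theorem ``Tight frames do norm retrieval'' proved earlier in the excerpt, $\Phi$ does norm retrieval. This disposes of the first sentence immediately, and the proof works verbatim in infinite dimensions because the tight-frame computation $\sum_i |\langle x,\phi_i\rangle|^2 = \|x\|^2$ used there never referenced finite-dimensionality.

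For the second sentence, I would invoke the characterization of norm retrieval in terms of partitions. Partition $\Phi$ into two disjoint subfamilies indexed by $I$ and $I^c$, with $W_1 = \overline{\spann}\{\phi_i\}_{i\in I}$ and $W_2 = \overline{\spann}\{\phi_i\}_{i\in I^c}$. By Theorem~\ref{T:Fard} (in finite dimensions) or by the partition formulation of Theorem~\ref{T:classificationofNormRet}, the fact that $\Phi$ does norm retrieval forces $W_1^\perp \perp W_2^\perp$. Now a vector orthogonal to the first set lies in $W_1^\perp$ and a vector orthogonal to the second set lies in $W_2^\perp$; since these two subspaces are mutually orthogonal, the two chosen vectors are orthogonal. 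That is exactly the claimed conclusion.

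The only genuine subtlety — and the step I expect to need the most care — is the passage to the infinite-dimensional setting, since Theorem~\ref{T:Fard} and Theorem~\ref{T:classificationofNormRet} are stated for $\RR^N$. To make the argument honest in infinite dimensions I would avoid those theorems and argue directly: given $x$ orthogonal to $\{\phi_i\}_{i\in I}$ and $y$ orthogonal to $\{\phi_i\}_{i\in I^c}$, one has $|\langle x+y,\phi_i\rangle| = |\langle x-y,\phi_i\rangle|$ for every $i$ (on $I$ both equal $|\langle y,\phi_i\rangle|$, on $I^c$ both equal $|\langle x,\phi_i\rangle|$), so by norm retrieval $\|x+y\| = \|x-y\|$; expanding both sides via the parallelogram-type identity gives $4\,\mathrm{Re}\langle x,y\rangle = 0$, and in the real case this is $\langle x,y\rangle = 0$. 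This is the same computation as in the proof of $(1)\Rightarrow(2)$ in Theorem~\ref{T:classificationofNormRet}, so it is routine; I would simply remark that it, together with the tight-frame argument, is dimension-free, which justifies the corollary's parenthetical claim about the infinite-dimensional case.

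\begin{proof}
A Parseval frame has frame operator equal to the identity, so it is a $1$-tight frame; by the theorem on tight frames above, $\Phi$ does norm retrieval, and the computation $\sum_i|\langle x,\phi_i\rangle|^2=\|x\|^2$ used there is valid in any dimension. Now partition $\Phi$ as $\{\phi_i\}_{i\in I}\cup\{\phi_i\}_{i\in I^c}$, and suppose $x\perp\phi_i$ for all $i\in I$ and $y\perp\phi_i$ for all $i\in I^c$. Then for every $i$,
\[
|\langle x+y,\phi_i\rangle| = |\langle x-y,\phi_i\rangle|,
\]
since on $I$ both sides equal $|\langle y,\phi_i\rangle|$ and on $I^c$ both sides equal $|\langle x,\phi_i\rangle|$. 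Because $\Phi$ does norm retrieval, $\|x+y\|=\|x-y\|$, and expanding,
\[
\|x\|^2+\|y\|^2+2\langle x,y\rangle = \|x\|^2+\|y\|^2-2\langle x,y\rangle,
\]
so $\langle x,y\rangle=0$. All steps are independent of the dimension, so the statement holds in the infinite-dimensional case as well.
\end{proof}
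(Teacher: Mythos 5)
Your proof is correct, but it follows a different route from the paper's. The paper does not argue through the norm retrieval property at all for the orthogonality claim: it takes the analysis operator $T$ of the Parseval frame and observes that if $x\perp\{\phi_i\}_{i\in J}$ and $y\perp\{\phi_i\}_{i\in J^c}$ then $Tx$ and $Ty$ have disjoint supports in $\ell_2$, hence $\langle Tx,Ty\rangle=0$, and since $T$ preserves inner products for a Parseval frame ($T^*T=I$), $\langle x,y\rangle=0$. You instead run the $\|x+y\|=\|x-y\|$ argument from the proof of $(1)\Rightarrow(2)$ of Theorem~\ref{T:classificationofNormRet}, using only the fact that tight frames do norm retrieval. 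Both are valid and both are dimension-free, so either justifies the remark about the infinite-dimensional case. The trade-off is this: your argument is more self-contained, invoking nothing beyond the definition of norm retrieval, but in the complex case the parallelogram computation only yields $\mathrm{Re}\langle x,y\rangle=0$ (exactly the limitation the paper flags in its separate complex-case proposition), whereas the paper's analysis-operator argument delivers genuine orthogonality $\langle x,y\rangle=0$ over $\CC$ as well. Since you explicitly restrict the final step to the real case, your proof is sound as written; just be aware that it proves slightly less than the paper's in the complex setting.
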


\begin{proof}
Let $\Phi =\{\phi_i\}_{i\in I}$ be a Parseval frame and let $J\subseteq I$. Let $T$ be its analysis operator. If $x\perp \{\phi_i\}_{i\in J}$ and $y\perp \{\phi_i\}_{i\in J^c}$. Then $Tx=(\ipt{x, \phi_i})$ and $Ty=(\ipt{y, \phi_i})$ do not have any nonzero coordinates in common. So $Tx\perp Ty$. Since, the analysis operator of a Parseval frame is an isometry, we have $x\perp y$.
\end{proof}

A classic result in frame theory is that a Parseval frame $\{\phi_i\}_{i\in I}$ has the property that if $\phi_j \notin W= span_{i\neq j}\{\phi_i\}$ then $\phi_j \perp W.$  It turns out that a much
more general result holds.

\begin{corollary}
Let $\{\phi_i\}_{i=1}^N$ be a Parseval frame in $\mathbb{R}^M$. For $I\subseteq[N]$, let $W_I=span \{\phi_i\}_{i\in I}$ and $W_{I^c}=span\{\phi_i\}_{i\in {I^c}}$. If $W_I\cap W_{I^c} =\{0\}$, then $W_I \perp W_{I^c}$.
\end{corollary}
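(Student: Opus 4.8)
The plan is to derive this from the previous corollary, which already establishes that a Parseval frame does norm retrieval and that vectors orthogonal to complementary pieces of the frame are themselves orthogonal. First I would fix the partition $[N] = I \cup I^c$ and the associated subspaces $W_I = \operatorname{span}\{\phi_i\}_{i\in I}$ and $W_{I^c} = \operatorname{span}\{\phi_i\}_{i\in I^c}$. The goal is to show $W_I \perp W_{I^c}$, i.e. that every $w \in W_I$ is orthogonal to every $v \in W_{I^c}$. Since $W_I \cap W_{I^c} = \{0\}$, I want to produce, from an arbitrary $w \in W_I$, a companion vector that is orthogonal to $\{\phi_i\}_{i\in I^c}$ and to which the previous corollary can be applied.

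The key step is the following decomposition. Take any $w \in W_I$. Write $\mathbb{R}^M = W_{I^c} \oplus W_{I^c}^\perp$ and decompose $w = u + z$ with $u \in W_{I^c}$ and $z \in W_{I^c}^\perp = \operatorname{span}\{\phi_i\}_{i\in I^c}^\perp$. I claim $u \in W_I$: indeed $u = w - z$, and both $w \in W_I$ and — this is the point — $z \in W_I$ as well, because $z \in W_{I^c}^\perp$ is orthogonal to all $\phi_i$ with $i \in I^c$, so $z$ lies in $\operatorname{span}\{\phi_i\}_{i\in I^c}^\perp = W_{I^c}^\perp$, and for a Parseval frame $W_{I^c}^\perp \subseteq W_I$ — this last containment is exactly the content of the preceding corollary's ``${W_1}^\perp \subseteq W_2$'' statement applied to the partition, or can be rederived: if $z \perp \phi_i$ for all $i \in I^c$ then by the Parseval expansion $z = \sum_{i\in I}\langle z,\phi_i\rangle \phi_i \in W_I$. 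Hence $u = w - z \in W_I$. But $u \in W_{I^c}$ too, so $u \in W_I \cap W_{I^c} = \{0\}$, giving $w = z \in W_{I^c}^\perp$. Thus $W_I \subseteq W_{I^c}^\perp$, which is precisely $W_I \perp W_{I^c}$.

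The main obstacle — really the only nontrivial point — is justifying that a vector orthogonal to $\{\phi_i\}_{i\in I^c}$ automatically lies in $W_I$; this is the ``classic'' Parseval phenomenon being invoked, and it follows instantly from the reconstruction formula $z = \sum_{i=1}^N \langle z,\phi_i\rangle\phi_i$ for a Parseval frame together with $\langle z,\phi_i\rangle = 0$ for $i\in I^c$. Everything else is the linear-algebra bookkeeping of an orthogonal decomposition and the hypothesis $W_I \cap W_{I^c} = \{0\}$. Alternatively, one could phrase it symmetrically: the previous corollary says any $x \perp \{\phi_i\}_{i\in I^c}$ and any $y \perp \{\phi_i\}_{i\in I}$ satisfy $x \perp y$, i.e. $W_{I^c}^\perp \perp W_I^\perp$; combined with the Parseval identities $W_{I^c}^\perp \subseteq W_I$ and $W_I^\perp \subseteq W_{I^c}$, one gets that $W_I$ and $W_{I^c}$ each sit inside the orthogonal complement of the other precisely on the ``overlap-free'' part, and the hypothesis $W_I\cap W_{I^c}=\{0\}$ forces the two subspaces to exhaust these complements, yielding $W_I \perp W_{I^c}$. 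I would present the first, more direct version.
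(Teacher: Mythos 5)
Your proof is correct, and it is worth noting that it is actually more self-contained than the route the paper has in mind. The paper states this corollary without proof, as a consequence of the immediately preceding corollary (a Parseval frame does norm retrieval, hence vectors orthogonal to the two complementary pieces of the frame are orthogonal, i.e.\ $W_I^{\perp} \perp W_{I^c}^{\perp}$); to upgrade that to $W_I \perp W_{I^c}$ one would then combine $W_I^{\perp} \subseteq W_{I^c}$ with the dimension count $\dim W_I + \dim W_{I^c} = M$ forced by $W_I \cap W_{I^c} = \{0\}$ and the fact that the frame spans, concluding $W_{I^c} = W_I^{\perp}$. Your argument bypasses the norm-retrieval machinery entirely: the only frame-theoretic input is the Parseval reconstruction $z = \sum_{i=1}^{N} \langle z, \phi_i \rangle \phi_i$, which gives the containment $W_{I^c}^{\perp} \subseteq W_I$, and then the decomposition $w = u + z$ together with $W_I \cap W_{I^c} = \{0\}$ kills the $W_{I^c}$-component and yields $W_I \subseteq W_{I^c}^{\perp}$ directly. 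Both arguments are valid; yours is the more elementary and avoids the implicit dimension count, while the paper's framing emphasizes that the statement is an instance of the general norm-retrieval phenomenon. Your closing ``symmetric'' sketch is vaguer than the main argument (the phrase about the subspaces ``exhausting'' the complements is doing the work of the dimension count without stating it), so presenting the first, direct version is the right choice.
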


\begin{corollary}
If $\Phi= \{\phi_i\}_{i=1}^N$ is a frame for $R^M$ with frame operator $S$ which does norm retrieval, then for every
$I\subset \{1,2,\ldots,N\}$, if $x\perp span\ \{\phi_i\}_{i\in I}$ then $x \in span\ \{S^{-1}\phi_i\}_{i\in I^c}$.  In particular, if
$\Phi$ is a Parseval frame then $x \in span\ \{\phi_i\}_{i\in I^c}$.
\end{corollary}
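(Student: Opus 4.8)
The plan is to deduce this from the preceding corollary (the one stating that for a frame $\Phi=\{\phi_i\}_{i=1}^N$ with frame operator $S$ doing norm retrieval, $x\perp\spann\{\phi_i\}_{i\in I}$ implies $x\in\spann\{S^{-1}\phi_i\}_{i\in I^c}$), together with the fact that a frame and its canonical dual share the spanning structure on index subsets. First I would recall that if $\Phi$ does norm retrieval then so does the rescaled system, and more importantly that applying $S^{-1/2}$ to $\Phi$ produces the canonical Parseval frame $\{S^{-1/2}\phi_i\}_{i=1}^N$; but for the present statement it is cleaner to argue directly. So I would start from $x\perp\spann\{\phi_i\}_{i\in I}$, i.e. $\langle x,\phi_i\rangle=0$ for all $i\in I$.

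The key observation is the reconstruction identity $x=\sum_{i=1}^N\langle x,\phi_i\rangle S^{-1}\phi_i$, which holds for any frame by equation~\eqref{E:reconstruct_formula}. Since $\langle x,\phi_i\rangle=0$ for $i\in I$, this collapses to $x=\sum_{i\in I^c}\langle x,\phi_i\rangle S^{-1}\phi_i$, which immediately exhibits $x$ as an element of $\spann\{S^{-1}\phi_i\}_{i\in I^c}$. Notably this first part does not even use norm retrieval — it is just the reconstruction formula — so in fact the hypothesis "does norm retrieval" is only needed to make the statement non-vacuous or is simply carried along from context; I would mention this but not belabor it. For the "in particular" clause, when $\Phi$ is Parseval we have $S=I$, so $S^{-1}\phi_i=\phi_i$, and the conclusion becomes $x\in\spann\{\phi_i\}_{i\in I^c}$ directly.

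The main obstacle — really the only subtlety — is making sure the reconstruction formula is being applied in the right space: here $\Phi$ is a frame for $\RR^M$ (the ambient space), indexed by $i=1,\dots,N$ with $N\ge M$, and $S$ is the frame operator on $\RR^M$, which is invertible on $\RR^M$. One must be careful that $x\in\RR^M$ and that all the vectors $S^{-1}\phi_i$ live in $\RR^M$, so that the span in question is a subspace of $\RR^M$; there is no issue since $S:\RR^M\to\RR^M$ is invertible. I would therefore write the proof in three short sentences: invoke~\eqref{E:reconstruct_formula}, use the orthogonality hypothesis to kill the $i\in I$ terms, and read off membership in the span; then specialize $S=I$ for the Parseval case. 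No estimate or case analysis is needed.

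\begin{proof}
Let $x\perp \spann\ \{\phi_i\}_{i\in I}$, so that $\langle x,\phi_i\rangle = 0$ for every $i\in I$. By the reconstruction formula~\eqref{E:reconstruct_formula} applied to the frame $\Phi$ for $\RR^M$ with frame operator $S$,
\[
x = \sum_{i=1}^N \langle x,\phi_i\rangle\, S^{-1}\phi_i = \sum_{i\in I^c} \langle x,\phi_i\rangle\, S^{-1}\phi_i,
\]
where the second equality uses that the coefficients indexed by $I$ vanish. Hence $x\in \spann\ \{S^{-1}\phi_i\}_{i\in I^c}$. If moreover $\Phi$ is a Parseval frame, then $S=I$, so $S^{-1}\phi_i=\phi_i$ for all $i$, and the identity above gives $x\in \spann\ \{\phi_i\}_{i\in I^c}$.
\end{proof}
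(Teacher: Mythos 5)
Your proof is correct and is essentially identical to the paper's: both apply the reconstruction formula $x=\sum_{i=1}^N\langle x,\phi_i\rangle S^{-1}\phi_i$ and drop the terms indexed by $I$ using the orthogonality hypothesis. Your side remark that the norm-retrieval hypothesis is never actually used is accurate — the paper's proof does not use it either.
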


\begin{proof}
Given x as in the corollary,
\begin{align*}
x &= \sum_{i=1}^N \langle x,\phi_i\rangle S^{-1}\phi_i\\
&= \sum_{i\in I^c}\langle x,\phi_i\rangle S^{-1}\phi_i.
\end{align*}
\end{proof}

We next provide a classification of norm retrieval using Naimark's theorem. It turns out that every frame can be scaled to look similar to Naimark's theorem.


\begin{proposition}
If $\{\phi_i\}_{i=1}^M$ is a frame with Bessel bound $B$ on $\RR^N$, then $\RR^N \subset \ell_2^{2M-1}$ with orthonormal basis $\{e_i\}_{i=1}^{2M-1}$ so that the orthogonal projection onto $\RR^N$ satisfies:  $Pe_i=\phi_i$ for every $i\in [M]$.
\end{proposition}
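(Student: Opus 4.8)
The plan is to complete $\{\phi_i\}_{i=1}^M$ to a Parseval frame for $\RR^N$ having exactly $2M-1$ elements, the first $M$ of which are $\phi_1,\dots,\phi_M$, and then to apply Naimark's Theorem to that Parseval frame. Let $S=\sum_{i=1}^M\langle\cdot,\phi_i\rangle\phi_i$ denote the frame operator. Since any Bessel bound may be replaced by the optimal one, we may assume $B=\|S\|$; and after the rescaling $\phi_i\mapsto\phi_i/\sqrt B$ — this is the sense in which ``every frame can be scaled to look like Naimark'', and it merely rescales the ambient copy of $\RR^N$ — we may assume $\|S\|=1$, so that $0\le S\le I$ and, crucially, $1\in\sigma(S)$.

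Set $T:=I-S\ge 0$. Because $1$ is an eigenvalue of $S$, $0$ is an eigenvalue of $T$, so $r:=\operatorname{rank}T\le N-1$. Using the spectral decomposition of $T$ on $\RR^N$, write $T=\sum_{j=1}^{r}\langle\cdot,\psi_j\rangle\psi_j$ with $\psi_j=\sqrt{\mu_j}\,u_j$, where $\mu_1,\dots,\mu_r>0$ are the nonzero eigenvalues of $T$ and $u_1,\dots,u_r$ the corresponding orthonormal eigenvectors. Then $\Psi:=\{\phi_1,\dots,\phi_M,\psi_1,\dots,\psi_r\}$ has frame operator $S+T=I$, so $\Psi$ is a Parseval frame for $\RR^N$, of cardinality $M+r\le M+(N-1)\le 2M-1$, the last inequality because a frame for $\RR^N$ spans and hence $N\le M$. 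Appending $2M-1-(M+r)\ge 0$ zero vectors — which changes neither the frame operator nor the Parseval property — produces a Parseval frame $\{\widetilde\psi_k\}_{k=1}^{2M-1}$ with $\widetilde\psi_i=\phi_i$ for every $i\in[M]$.

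Now apply Naimark's Theorem to $\{\widetilde\psi_k\}_{k=1}^{2M-1}$: there is an embedding $\RR^N\subset\ell_2^{2M-1}$ with an orthonormal basis $\{e_k\}_{k=1}^{2M-1}$ of $\ell_2^{2M-1}$ such that the orthogonal projection $P$ onto $\RR^N$ satisfies $Pe_k=\widetilde\psi_k$ for all $k$. In particular $Pe_i=\phi_i$ for every $i\in[M]$, which is the asserted conclusion.

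The spectral decomposition of $T$, the identity $S+T=I$, and the fact that zero padding preserves Parseval-ness are all routine. The one point that genuinely uses the hypotheses is the vector count $M+r\le 2M-1$: this is the combination of $r=\operatorname{rank}(I-S)\le N-1$ — which is exactly why one normalizes so that $\|S\|=1$ rather than merely $S\le I$ (if $\|S\|<1$ then $I-S$ has full rank $N$ and the count only yields $2M$) — together with the spanning estimate $N\le M$. I would therefore make the normalization explicit at the outset and note that it is the reason the statement is naturally phrased with the optimal Bessel bound.
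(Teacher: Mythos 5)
Your proof is correct and follows essentially the same route as the paper: normalize so the frame operator $S$ has top eigenvalue $1$, append the vectors $\sqrt{1-\lambda_j}\,g_j$ coming from the spectral decomposition of $I-S$ to obtain a Parseval frame of at most $2M-1$ elements, and invoke Naimark's Theorem. Your write-up is in fact more careful than the paper's on the two points it glosses over — the rescaling that makes $\lambda_1=1$ (and hence $\operatorname{rank}(I-S)\le N-1$) and the count $M+r\le 2M-1$ via $N\le M$ with zero-padding.
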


\begin{proof}
Let $\{g_i\}_{i=1}^N$ be the eigenbasis for the frame with respective eigenvalues $1=\lambda_i \ge \lambda_2 \ge \cdots \ge \lambda_N$.  For $M+1\le M+i \le 2M-1$ let
\[ \phi_{M+i} = \sqrt{1-\lambda_{i+1}}e_{i+1}.\]
Then $\{\phi_i\}_{i=1}^{2M-1}$ is a Parseval frame.  So $\RR^N \subset \ell_2(2M-1)$ with orthonormal basis $\{e_i\}_{i=1}^{2M-1}$ and the projection down to $\RR^N$ satisfies $Pe_i = \phi_i$ for all $i\in [2M-1]$.
\end{proof}

\begin{theorem}
Let $\Phi=\{\phi_i\}_{i=1}^M$ be a frame for $\RR^N$.  The following are equivalent:
\begin{enumerate}
\item $\Phi$ does norm retrieval.
\item $\RR^N \subset \ell_2^{2M-1}$ with orthonormal basis $\{e_i\}_{i=1}^{2M-1}$ and for every $x\in \RR^N$, if $|\langle x,e_i\rangle|$ is known for $i=1,2,\ldots,M$ then
\[ \|\sum_{i=M+1}^{2M-1}\langle x,e_i\rangle e_i\|^2 = \sum_{i=M+1}^{2M-1}|\langle x,e_i\rangle|^2, \mbox{ is known }.\]
In other words, if $x,y \in \RR^N$ and
\[|\langle x,e_i\rangle|=|\langle y,e_i\rangle| \mbox{ for all }i=1,2,\ldots,M,\]
then
\[ \|\sum_{i=M+1}^{2M-1}\langle x,e_i\rangle e_i\| = \|\sum_{i=M+1}^{2M-1}\langle y,e_i\rangle e_i\|.\]
\end{enumerate}
\end{theorem}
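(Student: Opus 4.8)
The plan is to use the embedding constructed in the preceding proposition: given the frame $\Phi=\{\phi_i\}_{i=1}^M$ with Bessel bound $B=1$ (we may normalize, since scaling does not affect norm retrieval by the corollary to Theorem~\ref{T:classificationofNormRet}), we have $\RR^N\subset \ell_2^{2M-1}$ with orthonormal basis $\{e_i\}_{i=1}^{2M-1}$, the projection $P$ onto $\RR^N$ satisfies $Pe_i=\phi_i$ for $i\in[M]$, and the full collection $\{Pe_i\}_{i=1}^{2M-1}$ is a Parseval frame for $\RR^N$. The first key observation is that for any $x\in\RR^N$, $\langle x,e_i\rangle = \langle x,Pe_i\rangle = \langle x,\phi_i\rangle$ for $i\le M$, and similarly $\langle x,e_i\rangle = \langle x,Pe_i\rangle$ for $i>M$; moreover, since $\{Pe_i\}_{i=1}^{2M-1}$ is Parseval, $\|x\|^2 = \sum_{i=1}^{2M-1}|\langle x,e_i\rangle|^2$. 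Hence $\sum_{i=M+1}^{2M-1}|\langle x,e_i\rangle|^2 = \|x\|^2 - \sum_{i=1}^M|\langle x,\phi_i\rangle|^2$.

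For $(1)\Rightarrow(2)$: assume $\Phi$ does norm retrieval and suppose $x,y\in\RR^N$ satisfy $|\langle x,e_i\rangle|=|\langle y,e_i\rangle|$ for $i=1,\ldots,M$, i.e. $|\langle x,\phi_i\rangle|=|\langle y,\phi_i\rangle|$. Then $\|x\|=\|y\|$ by norm retrieval, so from the displayed identity $\sum_{i=M+1}^{2M-1}|\langle x,e_i\rangle|^2 = \|x\|^2-\sum_{i=1}^M|\langle x,\phi_i\rangle|^2 = \|y\|^2-\sum_{i=1}^M|\langle y,\phi_i\rangle|^2 = \sum_{i=M+1}^{2M-1}|\langle y,e_i\rangle|^2$, which is exactly the desired conclusion since $\|\sum_{i=M+1}^{2M-1}\langle x,e_i\rangle e_i\|^2 = \sum_{i=M+1}^{2M-1}|\langle x,e_i\rangle|^2$ (the $e_i$ being orthonormal).

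For $(2)\Rightarrow(1)$: reverse the computation. Suppose $x,y\in\RR^N$ satisfy $|\langle x,\phi_i\rangle|=|\langle y,\phi_i\rangle|$ for all $i\le M$. That is precisely $|\langle x,e_i\rangle|=|\langle y,e_i\rangle|$ for $i\le M$, so by (2) we get $\sum_{i=M+1}^{2M-1}|\langle x,e_i\rangle|^2 = \sum_{i=M+1}^{2M-1}|\langle y,e_i\rangle|^2$. Adding $\sum_{i=1}^M|\langle x,\phi_i\rangle|^2 = \sum_{i=1}^M|\langle y,\phi_i\rangle|^2$ to both sides and using the Parseval property $\|x\|^2 = \sum_{i=1}^{2M-1}|\langle x,e_i\rangle|^2$ gives $\|x\|^2=\|y\|^2$, so $\Phi$ does norm retrieval.

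The only genuine subtlety — and the step I would be most careful about — is the normalization: the statement of (2) as written posits a specific embedding into $\ell_2^{2M-1}$, and one must check that the proposition preceding this theorem indeed yields such an embedding with the property that the completing vectors $\{Pe_i\}_{i=M+1}^{2M-1}$ together with $\{\phi_i\}_{i=1}^M$ form a Parseval frame, which requires the frame operator of $\Phi$ to have largest eigenvalue $1$ (equivalently $B=1$). Since rescaling all $\phi_i$ by a common positive constant preserves norm retrieval and the two conditions are invariant under such rescaling (condition (2) only constrains $|\langle x,e_i\rangle|$ up to the same overall scaling on both sides), there is no loss of generality, and I would state this reduction explicitly at the start of the proof. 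Everything else is a routine application of the Parseval identity and the orthonormality of $\{e_i\}$.
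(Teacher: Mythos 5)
Your proof is correct and follows essentially the same route as the paper's: embed $\RR^N$ into $\ell_2^{2M-1}$ via the preceding proposition, use $\langle x,\phi_i\rangle=\langle x,Pe_i\rangle=\langle x,e_i\rangle$, and split $\|x\|^2=\sum_{i=1}^{M}|\langle x,e_i\rangle|^2+\|\sum_{i=M+1}^{2M-1}\langle x,e_i\rangle e_i\|^2$. Your explicit remark on the normalization needed for the completing vectors to yield a Parseval frame is a point the paper glosses over, but it is the same argument.
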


\begin{proof}
We may assume $\RR^N \subset \ell_2^{2M-1}$. Let $\{e_i\}_{i=1}^{2M-1}$ be an orthonormal basis for $\ell_2^{2M-1}$ and the projection onto $\RR^N$ satisfies $Pe_i=\phi_i$ for $i=1,2,\ldots,M$. Now, $\Phi$ does norm retrieval if and only if for any $x \in \RR^N$, knowing $|\langle x,\phi_i\rangle|$ gives us $\|x\|$. But
\[\langle x,\phi_i\rangle = \langle x,Pe_i\rangle = \langle Px,e_i\rangle = \langle x,e_i\rangle.\]
Now, knowing $|\langle x,e_i\rangle|$ for $i=1,2,\ldots,M$ means knowing $\|x\|$.
But:
\[ \|x\|^2=\sum_{i=1}^M|\langle x,e_i\rangle|^2+\|\sum_{i=M+1}^{2M-1}\langle x,e_i\rangle e_i\|^2\].
\end{proof}

%
%
%
%
%
%
%

\end{document}